\newtheorem{lemma}{Lemma}[section]
\newtheorem{theorem}{Theorem}[section]
\newtheorem{proposition}{Proposition}[section]
\newtheorem{definition}{Definition}[section]
\newtheorem{remark}{Remark}[section]
\numberwithin{equation}{section} \numberwithin{theorem}{section}
\numberwithin{example}{section} \numberwithin{remark}{section}
\numberwithin{figure}{section} \numberwithin{algorithm}{section}
\def\ba{\begin{array}}
\def\ea{\end{array}}
\def\bma{\left(\begin{matrix}}
\def\ema{\end{matrix}\right)}
\def\be{\begin{equation}}
\def\ee{\end{equation}}
\def\dfrac{\displaystyle\frac}
\def\ds{\displaystyle}
\title{Optimal H{\"o}lder convergence of a class of singular steady states to the Bahouri-Chemin patch}
\author{Yupei Huang}
\address{Department of mathematics, Duke University, Durham, NC}\email{yh298@duke.edu}
\author{Chilin Zhang}
\address{School of Mathematical Sciences, Fudan University, Shanghai 200433, China}\email{zhangchilin@fudan.edu.cn}
\begin{document}
\begin{abstract}
   Singular steady states are important objects in obtaining ill-posedness results for 2D incompressible Euler equations.  In \cite{elgindi2022regular}, a family of singular steady states near the Bahouri-Chemin patch was introduced. In this paper, we obtain the optimal convergence results for the singular steady states constructed in \cite{elgindi2022regular} to the Bahouri-Chemin patch. We first derive a boundary Harnack principle, and then obtain the optimal convergence results using the singular integral representation based on Green's function.
\end{abstract}
\maketitle

\section{Introduction}
The 2D incompressible Euler equations are a fundamental model to describe the motion of fluid:\begin{equation}\label{2D Euler}
    \begin{aligned}
        &\omega_t+u\cdot \nabla \omega=0,\\
        & u=\nabla^{\perp}\Delta^{-1}\omega,\text{where $\nabla^{\perp}=(-\partial_y,\partial_x)$,} 
    \end{aligned}
\end{equation}
here $u$ is the velocity of the fluid and $\omega$ represents the vorticity of the fluid. 
In this paper, we study the solutions of the 2D incompressible Euler equations on the flat torus $\mathbb{T}^2$. 
It is well known that if the initial vorticity is bounded, there exists a global-in-time solution to \eqref{2D Euler} and the vorticity stays bounded (\cite{YUDOVICH19631407}). Meanwhile, the Cauchy problem of the 2D incompressible Euler equations with more singular initial data remains a fundamental and interesting question, as many physical relevant data fall into this category (for example, vortex sheets \cite{wu2006mathematical}).\par 

It is a challenging problem to determine whether \eqref{2D Euler} is ill-posed when the initial vorticity lies in a more singular space.
In \cite{bourgain2015strong}, the authors proved the norm inflation for the 2D incompressible Euler equation when the initial vorticity lies in $H^1(\mathbb{T}^2)$.  Later, 
in \cite{elgindi2017ill}, the authors provided a simplified proof of this result. Subsequently, in \cite{jeong2021loss}, the author extended the norm inflation result to the class of vorticities in $W^{1,p}$ for $p>1$. A common feature of \cite{elgindi2017ill} and \cite{jeong2021loss} is that they studied the case where the initial vorticity was close to a specific singular steady state in $\mathbb{T}^2$, known as the Bahouri-Chemin patch. Their results rely heavily on the singular behavior of the velocity field of the Bahouri-Chemin patch. This approach motivates the study of singular steady states to obtain ill-posedness results for the 2D incompressible Euler equations.\par

To investigate steady states of the 2D incompressible Euler equations, 
a natural way is to introduce the stream function $\psi:=\Delta^{-1}\omega$. Then $\psi$ satisfies the equation: \begin{equation}\label{steady2}
\nabla^{\perp}\psi\cdot\nabla \Delta\psi=0. 
\end{equation}
There is a great variety of solutions to \eqref{steady2}, and an important family of them satisfies the semilinear elliptic equations $\Delta \psi =F(\psi).$ In fact, semilinear elliptic equations have been widely used to construct steady states of the 2D incompressible Euler equations (see \cite{Choffrut2012LocalSO}, \cite{constantin2021flexibility}, and \cite{Zelati2020StationarySN}). In this context, we also highlight the previous work in \cite{elgindi2022regular}. \par  
In the case of the Bahouri-Chemin patch, the stream function is given by $\psi_0(x_1,x_2)=\Delta^{-1}(-sgn(x_1)sgn(x_2))$. It is straightforward to verify that $\psi_0$ satisfies the following semilinear elliptic equation: \begin{equation}\label{BC EQU}
    \Delta\psi_0=-sgn(\psi_0).
\end{equation} 
In \cite{elgindi2022regular}, the authors introduced a singular approximation of \eqref{BC EQU} to construct singular steady states near the Bahouri-Chemin patch. 
More precisely, they introduced a symmetry class of functions in the first quadrant of torus $\mathbb{T}^{++}$ (see the precise definition in the Notation section). The symmetry class is defined as
\begin{align}
    \mathcal{K}:=&\{\phi|\phi(x_{1},x_{2})=-\phi(x_{1},-x_{2})=-\phi(-x_{1},x_{2})=\phi(x_{2},x_{1})\}\\
    &\cap\{\phi|\phi(x_{1},x_{2})\geq \psi_0(x_{1},x_{2}), \text{for $(x_{1},x_{2}) \in \mathbb{T}^{++}\}$}.
\end{align}
By studying the solutions $\phi_{\epsilon}\in\mathcal{K}$ to the semilinear elliptic equations:  \begin{equation}\label{algebraic}
  \Delta \phi_{\epsilon}=G_{\epsilon}(\phi_{\epsilon})\ \mbox{in}\ \mathbb{T}^{++},    
  \end{equation}
the authors constructed a sequence of singular steady states $\phi_{\epsilon}\in\mathcal{K}$ converging to $\psi_{0}$ in $C^{1}(\mathbb{T}^2)$. A key element in their work was the introduction of a barrier function. The particular choice of $G_{\epsilon}$ in \eqref{algebraic} was an odd function  satisfying \begin{align*}
    G_{\epsilon}(t)=&\left\{\begin{array}{cc}
        -\epsilon^{s}/t^{s}, &\quad 0<t<\epsilon,\\
         -1, &\quad t\geq \epsilon.
    \end{array}\right.
\end{align*} The specific form of $G_{\epsilon}$ inspired the authors to construct a barrier function $\Psi$, which is an odd-odd symmetric homogeneous solution of degree $\frac{2}{1+s}$ for the equation
\begin{equation}\label{homogeneous solution definition}
    \Delta\Psi_{\epsilon}=-\frac{\epsilon^{s}}{\Psi_{\epsilon}^{s}}\mbox{ in }\mathbb{R}^{2}.
\end{equation}
In \cite{elgindi2022regular}, the authors gave a lower bound for $\phi_{\epsilon}$ near the origin in $\mathbb{T}^{++}$ using $\Psi_{\epsilon}$:
\par 
\begin{theorem}\label{Barrier previous}
There is a constant $C$, such that for solutions $\phi_{\epsilon}$ to \eqref{algebraic} in $\mathcal{K}$, we have \begin{equation}
        \phi_{\epsilon}(x)\geq \Psi_{\epsilon}(x), \text{ for $x \in B_{C\sqrt{\epsilon}}\cap \mathbb{T}^{++}$.} 
    \end{equation}
\end{theorem}
We will recall the details of $\Psi_{\epsilon}$ in Lemma \ref{Barrier function} in the Appendix, and it will be used later. The specific expression of $G_{\epsilon}$, together with Theorem \ref{Barrier previous}, provides an upper bound of $\Delta(\phi_{\epsilon})$. Then, using standard potential theory, the authors proved that $\phi_{\epsilon}$ converges to $\psi_0$ in $C^{1}$.

\par
The purpose of this work is to investigate in greater detail the singular steady states constructed in \cite{elgindi2022regular}. The qualitative estimates of these singular steady states may be useful in proving ill-posedness results for the 2D Euler equations.\par 
We now present the statements of our main results.
\subsection{Main Results}
In this paper, we establish the following three main results concerning the solution $\phi_{\epsilon}$ of equation \eqref{algebraic}.  
First, we sharpen the estimate of the stream functions $\phi_{\epsilon}$ in Theorem \ref{Barrier previous}.
\\
    \begin{theorem}\label{phi with upper bound}
There exists a fixed constant $\sigma\in(0,\frac{s}{100})$ such that for sufficiently small $\epsilon$, and for any $x \in \mathbb{T}^{++}$ satisfying  
\[
    |x| \leq \frac{\sqrt{\epsilon}}{-\ln{\epsilon}},
\]
we have  
\begin{equation}
    \Psi_{\epsilon}(x) \leq \phi_{\epsilon}(x) \leq \Psi_{\epsilon}(x) \Big(1 + \epsilon^{-\sigma} |x|^{\sigma} \Big),
\end{equation}
where $\Psi_{\epsilon}$ is the barrier function satisfying \eqref{homogeneous solution definition}.
\end{theorem}
Based on Theorem \ref{phi with upper bound}, we derive the following asymptotic expansion of $\phi_{\epsilon}$.

\begin{theorem}\label{main1}
Let $\alpha_s = \frac{1-s}{1+s}$, for sufficiently small $\epsilon$, 
 there exist constants $C$ and $\sigma_{0}$ depending on $s$ such that  
\begin{equation}\label{Higher Holder Estimates1}
    \|\phi_{\epsilon} - \Psi_{\epsilon} - \psi_0\|_{C^{1,\alpha_s+\sigma_{0}}(\mathbb{T}^{2} \cap B_{\frac{1}{3}}(0))} \leq C \epsilon^{\frac{s}{4}},
\end{equation}
where $\psi_0$ is the stream function of the Bahouri-Chemin patch satisfying \eqref{BC EQU}.
\end{theorem}
Based on the quantitative estimates above, we now obtain sharp convergence results for our singular steady states with respect to the Bahouri-Chemin patch.

\begin{theorem}\label{main2}
   For sufficiently small $\epsilon$, we give the following results concerning $\phi_{\epsilon}$:
    \begin{itemize}
        \item \textbf{Continuity of $\phi_{\epsilon}$ with respect to $\epsilon$}:  
        $\phi_{\epsilon}$ is continuous in $\epsilon$ in the topology of $C^{1,\alpha_{s}}(\mathbb{T}^2)$, where $\alpha_s = \frac{1-s}{1+s}$.
        
        \item \textbf{Sharp convergence of $\phi_{\epsilon}$ to $\psi_0$}:  
        $\phi_{\epsilon} \notin C^{1,\beta}(\mathbb{T}^2)$ for any $\beta > \alpha_s$. Moreover, $\phi_{\epsilon}$ converges to $\psi_0$ in $C^{1,\alpha_s}(\mathbb{T}^2)$ as $\epsilon \to 0$.
    \end{itemize}
\end{theorem}

\subsection{The Key Method: the Boundary Harnack Principle}
The key method to obtain Theorem \ref{phi with upper bound} is to develop the following boundary Harnack principle for the semilinear elliptic equation $\Delta u = -u^{-s}$.
\begin{theorem}[Boundary Harnack Principle]\label{boundary Harnack with limit 1}
    Let $0 < s < 1$, and suppose $u \geq \Psi$ satisfies
    \begin{equation}
    \begin{aligned}
         &\Delta u = -u^{-s} \quad \text{in } B_{R} \cap \mathbb{R}^{++},\\
         &u(x_1, x_2) = 0, \quad \text{if } x_1 = 0 \text{ or } x_2 = 0.
    \end{aligned}
    \end{equation}
    Then, there exist constants $C, \sigma > 0$ depending only on $s$ such that for any $x \in B_{R/2} \cap \mathbb{R}^{++}$, we have 
    \begin{equation}
        \frac{u}{\Psi}(x) - 1 \leq C R^{-\frac{2}{1+s}} |x|^{\sigma} \cdot \max_{B_{R}} u.
    \end{equation}
    Here, $\Psi$ is a shorthand for $\Psi_{1}$, which is a non-negative homogeneous solution of $\Delta \Psi = -\Psi^{-s}$.
\end{theorem}
The idea was initially inspired by Krylov \cite{K83} and Allen-Shahgholian \cite{allen2019new}. For example, let $L = A^{ij} \partial_{ij}$ or $L = \mathrm{div}(A\nabla)$ be a uniformly elliptic operator. If $u$ satisfies
\begin{equation}
     Lu = f \in L^{\infty} \quad \text{in } B_{1}^{+}, \quad u \Big|_{x_{n}=0} = 0,
\end{equation}
then the ratio $u/x_n$ is H\"older continuous in $B_{1/2}^{+}$.\par
Another type of boundary Harnack principle, as studied by Kemper \cite{Kemper72}, concerns the H\"older regularity of the ratio $u_{1}/u_{2}$ above a Lipschitz graph $\Gamma$, where $u_{1}, u_{2} \geq 0$ satisfy  
\begin{equation}
    \Delta u_{i} = 0 \quad \text{in } \{x_{n} \geq \Gamma(x')\}, \quad u_{i} \Big|_{\Gamma} = 0.
\end{equation}
This type of boundary Harnack principle can be extended to operators $L$ with measurable coefficients (see \cite{CFMS, FGMS}) and to more general domains such as H\"older domains (see \cite{BB91, BB94, JK82}).\par
In this paper, due to the natural scaling of \eqref{algebraic}, we start by analyzing the case where $\epsilon = 1$. We define  
\[
    w := \frac{\phi_{1}}{\Psi_{1}} - 1,
\]
which satisfies the following inequality:
\begin{equation}\label{degerate semiequation1}
    \mathrm{div}(\Psi^{2} \nabla w) \geq \Psi^{1-s} w.
\end{equation}
By studying the regularity theory of \eqref{degerate semiequation1}, we establish Theorem \ref{boundary Harnack with limit 1}. However, the main challenge in analyzing \eqref{degerate semiequation1} is that the weight $ \Psi^{2}$ is degenerate.\par
A standard approach to handling degenerate equations is to check whether the degenerate weight $\lambda(x)$ belongs to the $A_{2}$-Muckenhoupt class, which requires that for any ball $B$,
\begin{equation}
    \left(\frac{1}{|B|} \int_{B} \lambda \right) \cdot \left(\frac{1}{|B|} \int_{B} \lambda^{-1} \right) \leq C.
\end{equation}
The classical $H^{1}$ theory for uniformly elliptic PDEs extends to equations with $A_{2}$-Muckenhoupt weights; see \cite{Fabes2, Fabes3, Fabes1}. However, in our setting, the weight $\Psi^{2}$ does not belong to the $A_{2}$-Muckenhoupt class.\par 
There are two approaches to studying a non-$A_{2}$-Muckenhoupt weighted degenerate equation. One approach is to introduce a weighted Sobolev space $H_{\lambda}^{1}$, where the seminorm is defined as  
\[
    \|w\|_{H_{\lambda}^{1}}^{2} := \int_{\Omega} \lambda |\nabla w|^{2} \, dx.
\]  
 Subsequently, they perform analysis within this weighted space (see \cite{DP23a, DP23b} for reference). The other approach is to perturb the operator  
\[
    A_{\epsilon}^{ij} = A^{ij} + \epsilon \delta^{ij}
\]  
and then obtain regularity estimates independent of $\epsilon$ (see \cite{STV1, STV2}). Our study of the equation \eqref{degerate semiequation1} primarily follows the first approach.

\subsection{Notations}
Throughout this paper, we reserve certain characters for specific quantities according to the following conventions:

\begin{itemize}
    \item \textbf{$\mathbb{T}^2$:} The flat torus defined as $[-\frac{1}{2},\frac{1}{2}]^2/{\sim}$, where $(x_1,x_2) \sim (y_1,y_2)$ if there exists a pair $(m,n) \in \mathbb{Z}^2$ such that $(x_1,x_2) = (y_1 + m, y_2 + n)$.

    \item \textbf{$\mathbb{T}^{++}$:} The first quadrant of the flat torus, given by $\left\{0 < x_1 < \frac{1}{2}, 0 < x_2 < \frac{1}{2}\right\}$.

    \item \textbf{$\mathcal{I}^2$:} The standard cube in $\mathbb{R}^2$, defined as  
    \begin{equation}
        \mathcal{I}^2 = \left[-\frac{1}{2},\frac{1}{2}\right] \times \left[-\frac{1}{2},\frac{1}{2}\right].
    \end{equation}

    \item \textbf{$\mathbb{R}^{++}$:} The first quadrant in the whole space, given by $\left\{x_1 > 0, x_2 > 0\right\}$.

    \item \textbf{$\chi_A(x)$:} The characteristic function of a set $A$.

    \item \textbf{$\operatorname{sgn}(x)$:} The sign function, defined as  
    \begin{align*}
        \operatorname{sgn}(x) =
        \begin{cases}
            -1, & x < 0, \\
            1,  & x \geq 0.
        \end{cases}
    \end{align*}

    \item \textbf{$\mathcal{G}$:} The Green's function of the Laplacian on the torus. We recall some properties of the Green's function in Lemma \ref{Green} in the appendix.

    \item \textbf{$\Delta^{-1}$:} The inverse Laplacian, defined as  
    \[
        \Delta^{-1} f = f * \mathcal{G}.
    \]

    \item \textbf{$\psi_{0}$:} The stream function of the Bahouri-Chemin patch, given by  
    \[
        \psi_0 = \Delta^{-1} \big[-\operatorname{sgn}(x_{1})\operatorname{sgn}(x_{2})\big].
    \]

    \item \textbf{$\Psi_{\epsilon}$:} The barrier function constructed in Lemma 3.5 of \cite{elgindi2022regular}. (We recall Lemma 3.5 in the Appendix.) In particular, for $\epsilon=1$, we simply write $\Psi := \Psi_{1}$.  

    \item \textbf{$\sigma$:} A positive constant that first appears in Theorems \ref{main1} and \ref{boundary Harnack with limit 1}.

    \item \textbf{$\alpha_s$:} Defined as  
    \[
        \alpha_s = \frac{1-s}{1+s},
    \]
    which is related to the optimal convergence result in Theorem \ref{main2}.

    \item \textbf{$x$:} A vector whose Cartesian expression is $(x_1, x_2)$.

    \item \textbf{$|\cdot|$:} For notational convenience, we use $|\cdot|$ ambiguously to denote the absolute value of a scalar and the standard norm in the flat torus.

    \item \textbf{$\|\cdot\|$:} Similarly, we use $\|\cdot\|$ to represent either the Euclidean norm in $\mathbb{R}^2$ or the norm of a certain functional space, depending on the context.
    \item \textbf{Constants $c(\delta), \cdots$:} Throughout this paper, we use lowercase letters such as $c(\delta)$ to denote constants that may depend on the parameter $\delta$, particularly in Section 2.

\item \textbf{Constants $C,\cdots$:} For convenience, we use the uppercase letter such as $C$ to denote constants that are independent of $\delta$. Those values may vary from line to line.

\end{itemize}
\subsection{Organization of the Paper}

In Section \ref{section of degenerate equation harnack}, we prove Theorem \ref{boundary Harnack with limit 1} by analyzing the equation satisfied by
\[
w = \frac{u}{\Psi} - 1,
\]
as given in the equation \eqref{degerate semiequation1} (and later in \eqref{linear RHS}). In this section, we develop an \( H^1 \) theory for the equation \eqref{linear RHS} that includes establishing an \( L^\infty \) estimate and proving a weak Harnack principle.

In Section \ref{section of optimal convergence}, based on Theorem \ref{boundary Harnack with limit 1}, we first derive the asymptotic expansion of \( \phi_{\epsilon} \) near the origin. We then substitute this expansion into the Green function formulation to complete the proof of Theorem \ref{main1}. The proof of Theorem \ref{main2} is presented subsequently.

Finally, in the Appendix we recall several useful facts and provide an extension of the boundary Harnack principle.

\section{The boundary Harnack principle}\label{section of degenerate equation harnack}
In this section, we will prove Theorem \ref{boundary Harnack with limit 1} with $R=1$:
\begin{theorem}\label{boundary Harnack with limit 1, radius 1}
    Let $0<s<1$ and $u\geq \Psi$ satisfy
    \begin{equation}
    \begin{aligned}
         &\Delta u=-u^{-s}\mbox{ in }B_{1}\cap\mathbb{R}^{++},\\&
         u(x_1,x_2)=0,\text{if $x_{1}=0$ or $x_{2}=0$}.
    \end{aligned}
    \end{equation}
    Then there exist constants $C,\sigma>0$ depending only on $s$, such that for any $x\in B_{1/2}\cap\mathbb{R}^{++}$, we have
    \begin{equation}
     w:=\frac{u}{\Psi}-1\leq C|x|^{\sigma} \max_{B_{1}}u,
    \end{equation}
    where $\Psi=\Psi_{1}$ is defined in \eqref{homogeneous solution definition}.
\end{theorem}
\begin{remark}
    We note that the condition $u\geq \Psi$ can be replaced by $u\geq 0$, leading to similar estimates. The proof of this alternative is interesting and will be presented in the Appendix.
\end{remark}
\begin{proof}[Proof of Theorem \ref{boundary Harnack with limit 1}]
    Given Theorem \ref{boundary Harnack with limit 1, radius 1},
     Theorem \ref{boundary Harnack with limit 1} follows from a scaling argument by studying $u_R(x)=R^{-\frac{2}{1+s}}u(Rx)$.
\end{proof}

The remainder of this section is devoted to proving Theorem \ref{boundary Harnack with limit 1, radius 1}.
\subsection{The Equation Satisfied by the Ratio $\displaystyle w=\frac{u}{\Psi}-1$}
We now proceed to prove Theorem \ref{boundary Harnack with limit 1, radius 1}. The method is to study the ratio
\begin{equation}\label{ratio definition}
    w:=\frac{u}{\Psi}-1\geq 0.
\end{equation}
It satisfies the inequalities \eqref{linear RHS} and \eqref{0 RHS} below:
\begin{lemma}\label{weak subsolution}
    Under the setting of Theorem \ref{boundary Harnack with limit 1, radius 1}, we have
    \begin{equation}\label{linear RHS}
        \mathrm{div}(\Psi^{2}\nabla w)\geq\Psi^{1-s}w.
    \end{equation}
    Consequently, we get
    \begin{equation}\label{0 RHS}
        \mathrm{div}(\Psi^{2}\nabla w)\geq0.
    \end{equation}
\end{lemma}
\begin{proof}
    By direct computation, we obtain
    \[
    \Psi \Delta u = \mathrm{div}(\Psi^{2}\nabla w) + \Psi(1+w)\Delta\Psi.
    \]
    By substituting $\Delta u = -u^{-s}$ and $\Delta\Psi = -\Psi^{-s}$, we have
    \[
    \mathrm{div}(\Psi^{2}\nabla w) = \Psi^{1-s}\frac{(1+w)^{1+s} - 1}{(1+w)^{s}}
    = \Psi^{1-s} f(w)w,
    \]
    where
    \[
    f(w) := \frac{(1+w)^{1+s}-1}{(1+w)^{s}w} \geq 1 \quad \text{for any } w \geq -1.
    \]
\end{proof}

We also note that the function $w$ defined in \eqref{ratio definition} is $L^2$ integrable.

\begin{lemma}[$L^{2}$ finiteness]\label{L2 estimate}
    Suppose $u \in L^{\infty}(B_{R} \cap \mathbb{R}^{++})$, $u \geq \Psi$, and
    \begin{equation}
        \Delta u = -u^{-s}, \quad u\big|_{\partial \mathbb{R}^{++}} = 0,
    \end{equation}
    then $u/\Psi \in L^{2}(B_{1/2} \cap \mathbb{R}^{++})$.
\end{lemma}
\begin{proof}
    Since $u \geq \Psi$, we have $|\Delta u| \leq \Psi^{-s}$. In particular, $\Delta u \in L^{p - \delta}(B_{1} \cap \mathbb{R}^{++})$ for any small $\delta > 0$, where $p = \frac{1 + s}{s}$. 

    By the Schauder estimates, we obtain $u \in C^{1}$. Then as $u = 0$ on the coordinate axes, we conclude that $u(x)=O(\min\{x_{1},x_{2}\})$. Using the asymptotic behavior of $\Psi$ from Lemma~\ref{Barrier function}, we conclude that
    \begin{equation}
        \int_{B_{1/2} \cap \mathbb{R}^{++}} \left( \frac{u}{\Psi} \right)^{2} dx < \infty.
    \end{equation}
\end{proof}

We now adapt the definition of weak solutions from Chapter 8 of \cite{GT77}.

\begin{definition}[Weak solution, adapted from Chapter 8 of \cite{GT77}]\label{def. weak solution}
We say that a function $w \in H^{1}_{\mathrm{loc}}(B_{1} \cap \mathbb{R}^{++}) \cap L^{2}(B_{1} \cap \mathbb{R}^{++})$ is a weak solution to
\begin{equation}
    \mathrm{div}(\Psi^2 \nabla w) \geq f,
\end{equation}
if \begin{equation}
    \int_{B_{1} \cap \mathbb{R}^{++}} \Psi^{2} \nabla w \cdot \nabla g \, dx 
    \leq -\int_{B_{1} \cap \mathbb{R}^{++}} f g \, dx.
\end{equation} for every test function $g \in H^{1}(\mathbb{R}^{++})$ with $g \geq 0$ and compact support in $B_{1}$, in the sense that $g$ vanishes on the set $\partial (\mathbb{R}^{++}\cap B_1)\setminus(\{x_1=0\}\cup \{x_2=0\})$(See the figure \ref{fig:support g distribution} for an illustration for the support of $g$).
\begin{center}\label{fig:support g distribution}
    \includegraphics[width=0.6\linewidth]{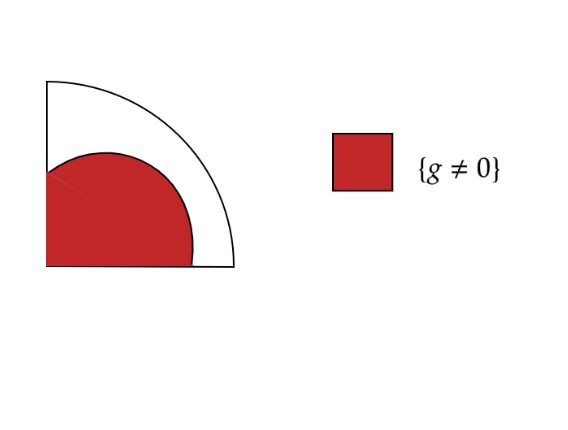}
     \captionof{figure}{illustrative figure for the support of $g$}
\end{center}
  \emph{Note that we do not require $g$ to vanish on the coordinate axes.}
\end{definition}

Next, we derive the $L^{\infty}$ estimate for the ratio $w$, which is a weak solution to \eqref{linear RHS} and \eqref{0 RHS}.

\subsection{$L^{\infty}$ Estimate of the Ratio}

In this subsection, we establish an interior $L^{\infty}$ estimate for the function $w$ defined in \eqref{ratio definition}. We start from stating the main result concerning the $L^{\infty}$ bound.

\begin{proposition}[$L^{\infty}$ estimate]\label{L infinity estimate}
    Assume that $w \in H^{1}_{\text{loc}}(B_{1} \cap \mathbb{R}^{++})$ is a weak solution of \eqref{0 RHS}(See Definition \ref{def. weak solution}). If $w \geq 0$ and 
    \[
    \int_{B_{1} \cap \mathbb{R}^{++}} w^{2} \, dx < \infty,
    \]
    then
    \[
    \|w\|_{L^{\infty}(B_{\frac{1}{2}} \cap \mathbb{R}^{++})}^{2} \leq C \int_{B_{1} \cap \mathbb{R}^{++}} \Psi^{2} w^{2} \, dx.
    \]
\end{proposition}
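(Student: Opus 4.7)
The plan is a Moser-type iteration combining Lemma~\ref{Caccioppoli} and Lemma~\ref{Sobolev}. Since $w\in H^{1}_{\mathrm{loc}}$ in two dimensions, the local Sobolev embedding $H^{1}_{\mathrm{loc}}\hookrightarrow L^{p}_{\mathrm{loc}}$ ensures that all intermediate $L^{p}$ quantities are finite a priori, so no separate truncation is needed. For any $p\ge 2$ and a cut-off $\eta$ with $\eta\equiv 1$ on $B_{r}$, supported in $B_{r'}$, testing the weak subsolution inequality against $\eta^{2}w^{p-1}$ and applying Young's inequality gives the Caccioppoli estimate for powers, $\int \Tilde{\psi}_{1}^{2}\eta^{2}|\nabla(w^{p/2})|^{2}\le Cp^{2}(r'-r)^{-2}\int_{B_{r'}}\Tilde{\psi}_{1}^{2}w^{p}$. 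Applying Lemma~\ref{Sobolev} to $\eta w^{p/2}$ and substituting the Caccioppoli bound then yields the main reverse-H\"older recursion
\begin{equation*}
\int_{B_{r}\cap\mathbb{R}^{2+}}\Tilde{\psi}_{1}^{4}w^{2p}\,dx\le\frac{C(p)}{(r'-r)^{2}}\Bigl(\int_{B_{r'}\cap\mathbb{R}^{2+}}\Tilde{\psi}_{1}^{2}w^{p}\,dx\Bigr)^{2}.
\end{equation*}

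To close this recursion despite the weight mismatch between $\Tilde{\psi}_{1}^{4}$ and $\Tilde{\psi}_{1}^{2}$, I would use the weighted H\"older inequality
\begin{equation*}
\int \Tilde{\psi}_{1}^{2}w^{p}\,dx\le\bigl\|\Tilde{\psi}_{1}^{-a}\bigr\|_{L^{q'}(B_{r'})}\Bigl(\int \Tilde{\psi}_{1}^{4}w^{\alpha p}\,dx\Bigr)^{1/\alpha},
\end{equation*}
which holds for every $a\in(0,2/5)$ with $\alpha:=4/(2+a)\in(5/3,2)$ and $q':=4/(2-a)$; the weight $\Tilde{\psi}_{1}^{-4a/(2-a)}$ is locally integrable in $\mathbb{R}^{2+}$ because, by $\Tilde{\psi}_{1}\sim r^{2/(s+1)}\sin(2\theta)$, both its radial and angular singularities are of order strictly weaker than $-1$ when $a<2/5$. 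Plugging this back into the recursion turns it into a self-contained iteration in $\Phi_{4}(p,r):=\bigl(\int_{B_{r}\cap\mathbb{R}^{2+}}\Tilde{\psi}_{1}^{4}w^{p}\bigr)^{1/p}$ with geometric growth factor $2/\alpha>1$ in the exponent.

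Choosing $P_{k}=2(2/\alpha)^{k}$ and $r_{k}\searrow 1$, the constants from successive iteration steps accumulate as a summable geometric series in $1/P_{k}$, producing $\Phi_{4}(P_{k},r_{k})\le C\,\Phi_{4}(2,r_{0})$ uniformly in $k$. Since $\Tilde{\psi}_{1}>0$ almost everywhere in $\mathbb{R}^{2+}$, $\Phi_{4}(P_{k},r_{k})\to\|w\|_{L^{\infty}(B_{1}\cap\mathbb{R}^{2+})}$ as $P_{k}\to\infty$; combined with the trivial bound $\Phi_{4}(2,r_{0})\le C\,\Phi_{2}(2,r_{0})=C\,(\int_{B_{2}\cap\mathbb{R}^{2+}}\Tilde{\psi}_{1}^{2}w^{2})^{1/2}$ (using that $\Tilde{\psi}_{1}$ is bounded on $B_{2}\cap\mathbb{R}^{2+}$), this delivers the desired estimate. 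The main obstacle is precisely this weight mismatch in Lemma~\ref{Sobolev}, which prevents the recursion from closing in a single weighted $L^{p}$ space; the weighted H\"older trick, which relies on the two-dimensionality of $\mathbb{R}^{2+}$ and on the mildness of the axial singularities of $\Tilde{\psi}_{1}^{-a}$ for small $a$, is what lets the iteration collapse into a single functional $\Phi_{4}$.
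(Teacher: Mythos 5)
Your proposal is correct in its essentials and takes a genuinely different route from the paper. The paper runs a De Giorgi level-set iteration: it sets $t_{k}=1-2^{-k}$, $w_{k}=(w-t_{k})_{+}$, $A_{k}=\int_{B_{r_{k}}}\Tilde{\psi}_{1}^{2}w_{k}^{2}$, and combines Lemma~\ref{Caccioppoli} with the \emph{weak} form of Lemma~\ref{Sobolev} (the one whose right-hand side involves $\int\chi_{\{w\neq0\}}$) plus a Chebyshev-type measure estimate to get a super-linear recursion $A_{k+1}\leq C16^{k}A_{k}A_{k-1}^{\gamma}$, closing on the small-scale normalization $\int\Tilde{\psi}_{1}^{2}w^{2}\leq\varepsilon_{0}$. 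You instead run a Moser power iteration, which forces you to use the \emph{strong} form of the Sobolev inequality $\int\Tilde{\psi}_{1}^{4}v^{4}\leq C\left(\int\Tilde{\psi}_{1}^{2}|\nabla v|^{2}\right)\left(\int\Tilde{\psi}_{1}^{2}v^{2}\right)$ with $v=\eta w^{p/2}$, and this creates the $\Tilde{\psi}_{1}^{4}$-vs-$\Tilde{\psi}_{1}^{2}$ weight mismatch that you then close with a weighted H\"older step. That step is legitimate: since $\Tilde{\psi}_{1}\sim r^{2/(s+1)}\sin(2\theta)$, the weight $\Tilde{\psi}_{1}^{-4a/(2-a)}$ is locally integrable on $\mathbb{R}^{2+}$ precisely when $4a/(2-a)<1$ (for the angular singularity) and $8a/\bigl((s+1)(2-a)\bigr)<2$ (for the radial one), both of which hold for $a<2/5$, so $\alpha=4/(2+a)\in(5/3,2)$ and the exponent ratio $2/\alpha>1$ gives the geometric growth you need. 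The bookkeeping with $Q_{k}=2(2/\alpha)^{k}$, $D_{k}=\bigl(Cp_{k}^{2}(r_{k}-r_{k+1})^{-2}\bigr)^{1/(2p_{k})}C_{a}^{1/p_{k}}$ is summable in log, and the limit $\Phi_{4}(Q_{k},r_{k})\to\|w\|_{L^{\infty}(B_{1}\cap\mathbb{R}^{2+})}$ is justified since $\Tilde{\psi}_{1}^{4}\,dx$ has full support. Compared to the paper's argument, yours is more in the classical reverse-H\"older spirit and gives an explicit recursion with stated constants, at the price of needing the extra H\"older trick; the paper's De Giorgi route avoids the weight mismatch entirely because the $\chi_{\{w\neq0\}}$ form of the Sobolev inequality feeds directly into the measure-decay mechanism.

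One point you gloss over: you assert that $H^{1}_{\mathrm{loc}}\hookrightarrow L^{p}_{\mathrm{loc}}$ (true in 2D) means ``no separate truncation is needed,'' but a priori $L^{p}$-integrability of $w$ does not make $\eta^{2}w^{p-1}$ an admissible $H^{1}$ test function --- that requires $w^{p-2}\nabla w\in L^{2}_{\mathrm{loc}}$. The standard fix is to test with $\eta^{2}\min(w,N)^{p-2}w$ (or $\eta^{2}_{h}\min(w,N)^{p-2}w$, incorporating the paper's boundary cut-off $\iota_{h}$ near $\partial\mathbb{R}^{2+}$ and sending $h\to0$ first, then $N\to\infty$ using the $L^{p}_{\mathrm{loc}}$ control). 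This is routine and does not threaten the argument, but your phrasing suggests the truncation can be skipped, which is not quite right.
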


To prove Proposition \ref{L infinity estimate}, we employ the De Giorgi - Nash - Moser iteration method. We first establish a version of the Caccioppoli inequality and a suitable Sobolev inequality.
\begin{lemma}[Caccioppoli inequality]\label{Caccioppoli}
    Assume that $w \geq 0$ is a weak solution of \eqref{0 RHS} in $B_{1} \cap \mathbb{R}^{++}$, and that
    \[
        \int_{B_{1} \cap \mathbb{R}^{++}} w^{2} \, dx < \infty.
    \]
    Let $\eta \in C^{\infty}_c(B_{1})$ be a cutoff function with $0 \leq \eta \leq 1$, which doesn't have to vanish on the set $\{x_1=0\}\cup \{x_2=0\}$ (Similar to the test function $g$ in Definition \ref{def. weak solution}). Then
    \begin{equation}
        \int_{B_{1} \cap \mathbb{R}^{++}} \Psi^{2} |\nabla(\eta w)|^{2} \, dx
        \leq C \int_{B_{1} \cap \mathbb{R}^{++}} \Psi^{2} w^{2} |\nabla \eta|^{2} \, dx.
    \end{equation}
\end{lemma}
\begin{proof}
    Let $\eta_{h} = \eta \cdot \iota_{h}$, where $\iota_{h}$ is another smooth cutoff function satisfying
\begin{equation}
    \iota_{h}\big|_{\{\Psi \geq h\}} = 1, \quad 
    \iota_{h}\big|_{\partial \mathbb{R}^{++} \cap B_{1}} = 0, \quad 
    |\nabla \iota_{h}| \leq \frac{C}{h}.
\end{equation}
Clearly, $\Psi^{2} |\nabla \iota_{h}|^{2} \leq C \chi_{\{\Psi \leq h\}}$. 

By multiplying both sides of the inequality $\mathrm{div}(\Psi^{2} \nabla w) \geq 0$ by $\eta_{h}^{2} w$ and integrating over $B_{1} \cap \mathbb{R}^{++}$, we have
\begin{equation}
    \int_{B_{1} \cap \mathbb{R}^{++}} \Psi^{2} \left( |\nabla(\eta_{h} w)|^{2} - w^{2} |\nabla \eta_{h}|^{2} \right) \, dx 
    = \int_{B_{1} \cap \mathbb{R}^{++}} \Psi^{2} \nabla w \cdot \nabla(\eta_{h}^{2} w) \, dx \leq 0.
\end{equation}
Using the facts that $0 \leq \eta \leq 1$ and $\Psi^{2} |\nabla \iota_{h}|^{2} \leq C \chi_{\{\Psi \leq h\}}$, we obtain
\begin{equation}
    \int_{B_{1} \cap \mathbb{R}^{++}} \Psi^{2} |\nabla(\eta w)|^{2} \chi_{\{\Psi \geq h\}} \, dx 
    \leq C \int_{B_{1} \cap \mathbb{R}^{++}} \Psi^{2} w^{2} |\nabla \eta|^{2} \, dx 
    + C \int_{B_{1} \cap \mathbb{R}^{++}} w^{2} \chi_{\{\Psi \leq h\}} \, dx.
\end{equation}
The desired inequality follows by sending $h \to 0$ and applying the Fatou's lemma.
\end{proof}
\begin{lemma}[Weighted Sobolev inequality]\label{Sobolev}
    Assume that $w \in H^{1}_{\mathrm{loc}}(\mathbb{R}^{++})$ and $w \to 0$ as $r \to \infty$. Then
    \begin{equation}\label{H^1}
        C \int_{\mathbb{R}^{++}} \Psi^{2} |\nabla w|^{2} \, dx 
        \geq \frac{\displaystyle \int_{\mathbb{R}^{++}} \Psi^{4} w^{4} \, dx}
                   {\displaystyle \int_{\mathbb{R}^{++}} \Psi^{2} w^{2} \, dx}
        \geq \frac{\displaystyle \int_{\mathbb{R}^{++}} \Psi^{2} w^{2} \, dx}
                   {\displaystyle \int_{\mathbb{R}^{++}} \chi_{\{w \neq 0\}} \, dx}.
    \end{equation}
\end{lemma}
\begin{proof}
    We first claim that under the same assumptions, the following weighted $W^{1,1}$ Sobolev inequality holds:
    \begin{equation}\label{W^1,1}
        C \int_{\mathbb{R}^{++}} \Psi^{2} |\nabla w| \, dx 
        \geq \left( \int_{\mathbb{R}^{++}} \Psi^{4} w^{2} \, dx \right)^{1/2}.
    \end{equation}
    
    Applying this inequality to $w^2$ and using the Cauchy-Schwarz inequality, we obtain
    \begin{equation}
        \int_{\mathbb{R}^{++}} \Psi^{4} w^{4} \, dx 
        \leq C \left( \int_{\mathbb{R}^{++}} \Psi^{2} w |\nabla w| \, dx \right)^{2}
        \leq C \left( \int_{\mathbb{R}^{++}} \Psi^{2} w^{2} \, dx \right)
               \left( \int_{\mathbb{R}^{++}} \Psi^{2} |\nabla w|^{2} \, dx \right).
    \end{equation}
\end{proof}
 The proof of the $W^{1,1}$ Sobolev inequality \eqref{W^1,1} is provided below.
\begin{proof}[Proof of inequality \eqref{W^1,1}]
    Based on the asymptotic behavior of $\Psi$ from Lemma~\ref{Barrier function}, we first introduce an equivalent weight
    \[
        \widetilde{\Psi} = r^{\frac{2}{s+1}} \sin(2\theta),
    \]
    and prove a version of \eqref{W^1,1} with $\Psi$ replaced by $\widetilde{\Psi}$. We then use polar coordinates $(r, \theta)$ with $r \geq 0$ and $0 \leq \theta \leq \frac{\pi}{2}$ to parametrize $\mathbb{R}^{++}$, and define the rays
    \[
        L_{\theta} = \left\{ x \in \mathbb{R}^{++} : \frac{x_2}{x_1} = \tan \theta \right\}
    \]
    (see Figure~\ref{sobolev figure}, line \textcircled{1}). We now correspondingly define the quantities
    \[
        E = \int_{\mathbb{R}^{++}} \widetilde{\Psi}^{2} |\nabla w| \, dx_1 dx_2, \quad 
        E(\theta) = \int_{L_{\theta}} r \widetilde{\Psi}^{2} |\nabla w| \, dr, \quad 
        I(\theta) = \int_{L_{\theta}} r \widetilde{\Psi}^{4} w^{2} \, dr.
    \]
    Since $dx_1 dx_2 = r \, dr \, d\theta$, we have
    \[
        \int_{0}^{\pi/2} E(\theta) \, d\theta = E.
    \]
    Therefore, it suffices to prove for all $\theta \in [0, \pi/2],$ we have 
    \begin{equation}\label{eq. sobolev theta to integrate}
        \sqrt{2} \, E(\theta) E \geq I(\theta).
    \end{equation}

    By integrating along the ray $L_{\theta}$ (see Figure~\ref{sobolev figure}, line \textcircled{1}), we have
    \begin{equation}\label{estimate sobolev 1}
        r \widetilde{\Psi}^{2} |w(r, \theta)| 
        \leq r \widetilde{\Psi}^{2}(r, \theta) \int_{r}^{\infty} |\partial_r w(\rho, \theta)| \, d\rho 
        \leq \int_{r}^{\infty} \rho \widetilde{\Psi}^{2}(\rho, \theta) |\partial_r w(\rho, \theta)| \, d\rho 
        \leq E(\theta).
    \end{equation}

    On the other hand, since $\widetilde{\Psi}$ is monotonically increasing in both $x_1$ and $x_2$, by integrating vertically (line \textcircled{2} in Figure~\ref{sobolev figure}), we get
    \begin{equation}\label{estimate sobolev 2}
        \widetilde{\Psi}^{2}(x_1, x_2) |w(x_1, x_2)| 
        \leq \widetilde{\Psi}^{2}(x_1, x_2) \int_{x_2}^{\infty} |\partial_2 w(x_1, h)| \, dh 
        \leq \int_{x_2}^{\infty} \widetilde{\Psi}^{2}(x_1, h) |\partial_2 w(x_1, h)| \, dh.
    \end{equation}
    For $(x_1, x_2) \in L_{\theta}$ with $0 \leq \theta \leq \frac{\pi}{4}$, we integrate \eqref{estimate sobolev 2} and get
    \begin{equation}
        \int_{L_{\theta}} \widetilde{\Psi}^{2} |w| \, dr 
        \leq \frac{1}{\cos \theta} \int_{L_{\theta}} \widetilde{\Psi}^{2} |w| \, dx_1 
        \leq \sqrt{2} E.
    \end{equation}
    Then by symmetry, we have \begin{equation}\label{estimate sobolev 3}
        \int_{L_{\theta}} \widetilde{\Psi}^{2} |w| \, dr 
        \leq\sqrt{2} \, E, \text{for all $\theta\in [0,\frac{\pi}{2}]$.}
    \end{equation}

    Combining \eqref{estimate sobolev 1} and \eqref{estimate sobolev 3}, we obtain
    \begin{equation}
        I(\theta) 
        \leq \left( \sup_{L_{\theta}} r \widetilde{\Psi}^{2} |w(r, \theta)| \right)
              \int_{L_{\theta}} \widetilde{\Psi}^{2} |w| \, dr 
        \leq \sqrt{2} \, E(\theta) E,
    \end{equation}
    which completes the proof of \eqref{eq. sobolev theta to integrate}.
    
    \begin{center}
        \label{sobolev figure}
        \includegraphics[width=0.6\linewidth]{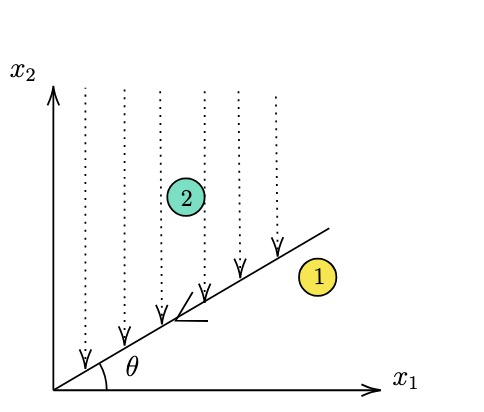}
        \captionof{figure}{Integration along lines \textcircled{1} and \textcircled{2} yields estimates \eqref{estimate sobolev 1} and \eqref{estimate sobolev 2}, respectively.}
    \end{center}
\end{proof}

\begin{proof}[Proof of Proposition \ref{L infinity estimate}]
    Assume 
    \[
    \int_{B_{1}\cap\mathbb{R}^{++}}\Psi^{2}w^{2}\leq\varepsilon_{0}
    \]
    for some sufficiently small $\varepsilon_{0}$. It suffices to show that $w\leq1$ in $B_{1/2}$. For $k\geq1$, define
    \[
    t_{k}=1-2^{-k},\quad r_{k}=\frac{1}{2}+2^{-k},
    \]
    and set
    \[
    w_{k}=(w-t_{k})_{+},\quad A_{k}=\int_{B_{r_{k}}\cap \mathbb{R}^{++}}\Psi^{2}w_{k}^{2}.
    \]
    Note that $A_{1},A_{2}\leq\varepsilon_{0}$, and our goal is to show that
    \[
    \lim_{k\to\infty}A_{k}=0,
    \]
    provided $\varepsilon_{0}$ is small.

    Let $\eta_{k}$ be a family of cutoff functions (to be inserted in Lemma \ref{Caccioppoli}) satisfying
    \[
    \operatorname{supp}(\eta_{k})=B_{r_{k}}\cap\mathbb{R}^{++},\quad
    \eta_{k}\big|_{B_{r_{k+1}}\cap\mathbb{R}^{++}}=1,\quad
    |\nabla\eta_{k}|\leq C2^{k}.
    \]
    Then, by applying the Caccioppoli inequality, we have
    \[
    \int_{B_{r_{k}}\cap\mathbb{R}^{++}}\Psi^{2}|\nabla(\eta_{k}w_{k})|^{2}\leq C4^{k}\int_{B_{r_{k}}\cap\mathbb{R}^{++}}\Psi^{2}w_{k}^{2}.
    \]
    Next, we apply the Sobolev inequality from Lemma \ref{Sobolev} to $\eta_{k}w_{k}$ and we get
    \[
    \int_{B_{r_{k+1}}\cap \mathbb{R}^{++}}\Psi^{2}w_{k}^{2}\leq C4^{k}\left(\int_{B_{r_{k}}\cap\mathbb{R}^{++}}\Psi^{2}w_{k}^{2}\right)
    \left(\int_{B_{r_{k}}\cap\mathbb{R}^{++}}\chi_{\{w_{k}>0\}}\right).
    \]
    Since $w_{k+1}\leq w_{k}$, we have
    \[
    A_{k+1}\leq\int_{B_{r_{k+1}}\cap \mathbb{R}^{++}}\Psi^{2}w_{k}^{2}.
    \]
    On the other hand, we note that 
    \[
    \{w_{k}>0\} = \{w_{k-1}>t_{k}-t_{k-1}=2^{-k}\}.
    \]
   As a consequence,
    \[
    A_{k-1}\geq\int_{B_{r_{k}}\cap \mathbb{R}^{++}}\Psi^{2}w_{k-1}^{2}\geq 4^{-k}\int_{B_{r_{k}}\cap \mathbb{R}^{++}}\Psi^{2}\chi_{\{w_{k}>0\}}.
    \]
    Since $B_{r_{k}}\cap \mathbb{R}^{++}\subseteq B_{2}\cap\mathbb{R}^{++}$, by applying H\"older's inequality, we get
    \begin{align*}
        \int_{B_{r_{k}}\cap\mathbb{R}^{++}}\chi_{\{w_{k}>0\}}
        &\leq \left(\int_{B_{r_{k}}\cap\mathbb{R}^{++}}\Psi^{2}\chi_{\{w_{k}>0\}}\right)^{\frac{1}{p}}
        \left(\int_{B_{2}\cap\mathbb{R}^{++}}\Psi^{-\frac{2}{p-1}}\right)^{\frac{p-1}{p}} \\
        &\leq C\left(\int_{B_{r_{k}}\cap\mathbb{R}^{++}}\Psi^{2}\chi_{\{w_{k}>0\}}\right)^{1/p}
        \leq C\left(4^{k}A_{k-1}\right)^{1/p},
    \end{align*}
    for a sufficiently large $p$. By combining these estimates, we have  the following inductive inequality
    \[
    A_{k+1}\leq C\,16^{k}A_{k}A_{k-1}^{1/p}.
    \]
    Taking logarithms on both sides, we can show if $A_1,A_2$ are small, we have $A_{k}\to0$ as $k\to\infty$ .
\end{proof}

\subsection{Weak Harnack Principle of the Ratio}
In this section we introduce and prove a weak Harnack principle.

\begin{proposition}[Weak Harnack Principle]\label{weak harnack}
    Assume that $w\in H^{1}_{loc}$ is defined in the circular sector $B_{1}\cap\mathbb{R}^{++}$ with $w\leq 1$, and satisfies 
    \[
    \int_{B_{1}\cap\mathbb{R}^{++}}w^{2}\,dx <\infty.
    \]
    If $w$ is a weak solution to \eqref{0 RHS} and $w$ satisfies 
    \[
    \int_{B_{1}\cap\mathbb{R}^{++}}\Psi^{2}\chi_{\{w\leq0\}}\,dx\geq\delta,
    \]
    then 
    \begin{equation}\label{eq. weak harnack result}
    w\leq 1-c(\delta) \quad \text{in } B_{1/2}\cap\mathbb{R}^{++},
    \end{equation}
    where $c(\delta)$ is a positive constant depending only on $\delta$.
\end{proposition}

To prove Proposition \ref{weak harnack}, we first state a version of isoperimetric inequality:

\begin{lemma}[Bi-partition Isoperimetric Inequality]\label{isoperimetric inequality}
    For every $\delta>0$, there exists $\sigma(\delta)>0$ such that for every function $w\in H^{1}_{loc}$ satisfying
    \begin{equation}\label{bi-partition condition}
    \int_{B_{1}\cap\mathbb{R}^{++}}\Psi^{2}\chi_{\{w\leq0\}}\,dx\geq\delta\int_{B_{1}\cap\mathbb{R}^{++}}\Psi^{2}\,dx \quad \text{and} \quad \int_{B_{1}\cap\mathbb{R}^{++}}\Psi^{2}\chi_{\{w\geq1\}}\,dx\geq\delta\int_{B_{1}\cap\mathbb{R}^{++}}\Psi^{2}\,dx,
    \end{equation}
   we have the following inequality:
    \begin{equation}\label{isoperimetric inequality estimate}
    \int_{B_{1}\cap\mathbb{R}^{++}}\Psi^{2}|\nabla w|\,dx\geq\sigma(\delta).
    \end{equation}
\end{lemma}

\begin{proof}
    Let $h = h(\delta) > 0$ be sufficiently small so that
    \begin{equation}\label{eq:small-h}
        \int_{B_{1}\cap\mathbb{R}^{++}}\Psi^{2}\chi_{\{\Psi < h(\delta)\}}\,dx
        \leq \frac{\delta}{2}\int_{B_{1}\cap\mathbb{R}^{++}}\Psi^{2}\,dx.
    \end{equation}
    Then, in the region 
    \[
    \Omega := \{\Psi \geq h(\delta)\}\cap B_{1}\cap\mathbb{R}^{++},
    \]
    it follows that
    \begin{equation}\label{eq:omega-measure}
        \int_{\Omega}\Psi^{2}\chi_{\{w\leq0\}}\,dx
        \geq \frac{\delta}{2}\int_{B_{1}\cap\mathbb{R}^{++}}\Psi^{2}\,dx.
    \end{equation}
     From the equation \eqref{eq:omega-measure},
    \begin{equation}\label{eq:chi-bound}
        \int_{\Omega}\chi_{\{w\leq0\}}\,dx
       \geq \frac{1}{\|\Psi\|^2_{L^{\infty}(B_{1}\cap\mathbb{R}^{++})}}
        \int_{\Omega}\Psi^{2}\chi_{\{w\leq0\}}\,dx\geq \frac{\delta}{2\|\Psi\|^2_{L^{\infty}(B_{1}\cap\mathbb{R}^{++})}}\int_{B_1\cap \mathbb{R}^{++}}\Psi^2dx:=c_{1}(\delta).
    \end{equation}
    Similarly, we also obtain
    \begin{equation}\label{eq. w>=1 large}
    \int_{\Omega}\chi_{\{w\geq1\}}\,dx \geq c_{1}(\delta).
    \end{equation}
    Let us  denote
    \begin{equation}
        \mathop{avg}_{\Omega}w = \frac{1}{|\Omega|}\int_{\Omega}w(x)\,dx.
    \end{equation}
    If $\ds\mathop{avg}_{\Omega}w\leq\frac{1}{2}$, we apply \eqref{eq. w>=1 large} and conclude
    \begin{equation}
        \int_{\Omega}\Bigl|w - \mathop{avg}_{\Omega}w\Bigr|\,dx
        \geq\frac{1}{2}\int_{\Omega}\chi_{\{w\geq1\}}dx\geq\frac{c_{1}(\delta)}{2}.
    \end{equation}
    By symmetry, we have the following lower bound estimate:
    \begin{equation}
        \int_{\Omega}\Bigl|w - \mathop{avg}_{\Omega}w\Bigr|\,dx
        \geq\frac{c_{1}(\delta)}{2}.
    \end{equation}
    We then apply the standard Poincar\'e inequality to $w$ in $\Omega$, and obtain
    \begin{equation}\label{eq:poincare}
        \int_{\Omega}|\nabla w|\,dx\geq \frac{1}{C}\int_{\Omega}\Bigl|w - \mathop{avg}_{\Omega}w\Bigr|\,dx \geq \frac{c_1({\delta})}{2C}.
    \end{equation}
    Finally, since $\Psi \geq h(\delta)$ in $\Omega$, we have
    \begin{equation}\label{eq:final}
        \int_{B_{1}\cap\mathbb{R}^{++}}\Psi^{2}|\nabla w|\,dx 
        \geq h(\delta)^{2}\int_{\Omega}|\nabla w|\,dx 
        \geq \frac{c_1({\delta})h(\delta)^{2}}{2C}
        =: \sigma(\delta).
    \end{equation}
\end{proof}

Now we are able to prove Proposition \ref{weak harnack}.

\begin{proof}[Proof of Proposition \ref{weak harnack}]
    For $k\geq 1$, we first define 
    \[
    t_{k} = 1 - 2^{-k} \quad\text{and}\quad
    w_{k} = 2\Biggl(\frac{w - t_{k}}{1-t_{k}}\Biggr)_{+}, \quad 
    \overline{w}_{k} = \min\{w_{k},1\}.
    \]
    Let us denote
    \[
    r(\delta) = \frac{\delta}{\pi\|\Psi\|_{L^{\infty}(B_{1})}^{2}},
    \]
    then
    \begin{align}\label{eq:nonzero-measure}
    \int_{B_{1-r(\delta)}\cap\mathbb{R}^{++}}\Psi^{2}\chi_{\{\overline{w}_{k}\leq 0\}}\,dx \geq&\int_{B_{1-r(\delta)}\cap\mathbb{R}^{++}}\Psi^{2}\chi_{\{w\leq 0\}}\,dx-\int_{(B_{1}\setminus B_{1-r(\delta)})\cap\mathbb{R}^{++}}\Psi^{2}dx\\
    \geq&\delta-\|\Psi\|_{L^{\infty}(B_{1})}^{2}\cdot\Big|(B_{1}\setminus B_{1-r(\delta)})\cap\mathbb{R}^{++}\Big|\geq\frac{\delta}{2}.
    \end{align}
    By choosing the cutoff function $\eta(x)\geq0$ supported in $B_{1}\cap\mathbb{R}^{++}$ satisfying
    \begin{equation}
        \eta=1\mbox{ in }B_{1-r(\delta)}\cap\mathbb{R}^{++},\quad|\nabla\eta|\leq2r(\delta)^{-1},
    \end{equation}
in Lemma~\ref{Caccioppoli} and using the fact $0\leq w_{k}\leq 2$, there exists some uniform $C_{1}$, such that
    \begin{align}\label{eq:grad-bound}
    &\int_{B_{1-r(\delta)}\cap\mathbb{R}^{++}}\Psi^{2} |\nabla \overline{w}_{k}|^{2}\,dx 
    \leq\int_{B_{1-r(\delta)}\cap\mathbb{R}^{++}}\Psi^{2} |\nabla w_{k}|^{2}\,dx\\
    \leq&\int_{B_{1}\cap\mathbb{R}^{++}}\Psi^{2} |\nabla(\eta w_{k})|^{2}\,dx
    \leq\int_{B_{1}\cap\mathbb{R}^{++}}\Psi^{2}w_{k}^{2}|\nabla\eta|^{2}dx\\
    \leq&\frac{16}{r(\delta)^{2}}\int_{B_{1}\cap\mathbb{R}^{++}}\Psi^{2}dx\leq C_{1}r(\delta)^{-2}.
    \end{align}
     Note that the truncated functions $w_{k}$ are all weak solutions to \eqref{0 RHS}, by Proposition \ref{L infinity estimate}, there exists a uniformly small $C_{2}$, such that if $\int_{B_{1-r(\delta)}\cap\mathbb{R}^{++}}\Psi^{2}\chi_{\{\overline{w}_{k}\geq 0\}}\,dx\leq C_{2}$, we have
    \begin{equation}
        w_{k}\leq\frac{1}{10}\mbox{ \, in }B_{1/2}\cap\mathbb{R}^{++}.
    \end{equation}
    Let us denote
    \[
    K_{0} = 100\left\lceil \frac{C_{1}r(\delta)^{-2}}{\sigma(\min\{\frac{\delta}{2},C_{2}\})^{2}} \int_{B_{1}\cap\mathbb{R}^{++}}\Psi^{2}\,dx \right\rceil + 100,
    \]
    where $\sigma(\cdot)$ is the function obtained in Lemma~\ref{isoperimetric inequality}. We now claim that
    \begin{equation}\label{eq. claim in weak harnack}
        \int_{B_{1-r(\delta)}\cap\mathbb{R}^{++}}\Psi^{2}\chi_{\{\overline{w}_{k}\geq 0\}}\,dx\leq C_{2},\mbox{ for all }k\geq K_{0}.
    \end{equation}
Based on the above claim, by choosing $c(\delta)=2^{-K_{0}-100}$ in the desired estimate \eqref{eq. weak harnack result}, we have 
$w_{K_{0}}\leq\frac{1}{10}$ in $B_{1/2}\cap\mathbb{R}^{++}$. This implies \begin{equation}
        w\leq1-2^{-K_{0}}+\frac{2^{-K_{0}}}{20}\leq1-c(\delta)\mbox{ in }B_{1/2}\cap\mathbb{R}^{++},
    \end{equation}and we finish the proof.\\ Now we are going to prove the claim \eqref{eq. claim in weak harnack}.
    Suppose on the contrary, that the claim is not true. Then, by Lemma \ref{isoperimetric inequality}, we have 
    \begin{equation}\label{eq:grad-lower}
    \int_{B_{1-r(\delta)}\cap\mathbb{R}^{++}}\Psi^{2} |\nabla \overline{w}_{k}|\,dx \ge \sigma\Big(\min\{\frac{\delta}{2},C_{2}\}\Big), \text{for every $k\leq K_{0}$.}
    \end{equation}
    This, together with the bound in \eqref{eq:grad-bound} and the Cauchy-Schwartz inequality, implies that
    \begin{equation}\label{eq:level-set-vol}
    \int_{B_{1-r(\delta)}\cap\mathbb{R}^{++}} \Psi^{2}\chi_{\{t_{k} < w < t_{k+1}\}}\,dx 
    \ge \frac{\sigma(\min\{\frac{\delta}{2},C_{2}\})^{2}}{C_{1}r(\delta)^{-2}}.
    \end{equation}
    Since the regions $\{t_{k} < w < t_{k+1}\}$ are pairwise disjoint, by summing the inequality \eqref{eq:level-set-vol} over $k=1,2,\dots,K_{0}$, we find that
    \begin{equation}
        \int_{B_{1-r(\delta)}\cap\mathbb{R}^{++}} \Psi^{2}dx\geq\sum_{k=1}^{K_{0}}\int_{B_{1-\delta_{1}}\cap\mathbb{R}^{++}}\Psi^{2}\chi_{\{t_{k} < w < t_{k+1}\}}\,dx>100\int_{B_{1}\cap\mathbb{R}^{++}} \Psi^{2}dx,
    \end{equation}
    which is a contradiction. Then, we prove the claim \eqref{eq. claim in weak harnack}.
\end{proof}

\subsection{Proof of Theorem \ref{boundary Harnack with limit 1, radius 1}}
We now prove Theorem \ref{boundary Harnack with limit 1, radius 1} by studying the ratio
\[
w = \frac{u}{\Psi} - 1.
\]
In the proof, we make use of the following consequence of the ABP estimate.

\begin{lemma}\label{ABP}
    Assume that \(A\in C^{1}\) is a $2\times2$ symmetric matrix satisfying \(\lambda I\leq A\leq\Lambda I\) for some positive constants \(\lambda\) and \(\Lambda\). Suppose that \(u\) satisfies
    \[
    \mathrm{div}(A\nabla u)\geq\chi_{S} \quad \text{in } B_{1},
    \]
    where \(S\subset B_{1}\) is a measurable set with 
    \[
    |B_{1}\setminus S| \leq \delta,
    \]
    and where \(\delta>0\) is a small number depending on the matrix \(A\). Then there exists \(\sigma>0\), depending only on \(A\), such that 
    \[
    \max_{B_{1/2}} u \leq \max_{B_{1}} u - \sigma.
    \]
\end{lemma}
By Lemma~\ref{ABP}, the positivity of the right-hand side of \eqref{linear RHS} implies the following lemma.
\begin{lemma}\label{uniform squezzing}
    Assume $0\leq w\leq1$ is a weak solution of \eqref{linear RHS} in $B_{1}\cap\mathbb{R}^{++}$. Then there exist $\sigma,\delta>0$, such that
    \begin{equation}
        \int_{B_{1}\cap\mathbb{R}^{++}}\Psi^{2}\chi_{\{w\leq1-\sigma\}}dx\geq\delta.
    \end{equation}
\end{lemma}

\begin{proof}
    If there exists some constant $\delta$ independent of the choice of $w$, such that
    \begin{equation}
        \int_{B_{1}\cap\mathbb{R}^{++}}\Psi^{2}\chi_{\{w\leq 1/2\}}\,dx \geq \delta,
    \end{equation}
     we have already proven the Lemma (in this case, $\sigma=\frac{1}{2}$). Otherwise, suppose that for some sufficiently small $\delta$,
    \begin{equation}\label{w mostly large revised}
        \int_{B_{1}\cap\mathbb{R}^{++}}\Psi^{2}\chi_{\{w\leq 1/2\}}\,dx \leq \delta.
    \end{equation}
   Now we consider the interior disk 
    \[
    \mathcal{D} = \bigl\{ (x_{1}-\tfrac{1}{2})^{2} + (x_{2}-\tfrac{1}{2})^{2} \leq \tfrac{1}{100} \bigr\},
    \]
    as illustrated in the figure below.
     \begin{center}\label{fig:ABP picture}
        \includegraphics[width=0.6\linewidth]{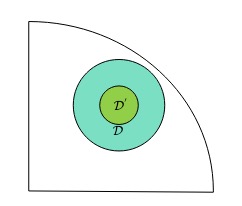}
\captionof{figure}{Illustrative figure for the domain $\mathcal{D}$}
    \end{center}
    In the set 
    \[
    S = \mathcal{D}\cap \{ w\geq 1/2 \},
    \]
    the right-hand side of \eqref{linear RHS} satisfies
    \[
    \Psi^{1-s}w \geq C,
    \]
    for a uniformly small constant \(C>0\). By \eqref{w mostly large revised} and that $\Psi$ has a positive lower bound in $\mathcal{D}$, when \(\delta\) is sufficiently small, the set \(\mathcal{D}\setminus S\) has small measure. Consequently, by applying Lemma~\ref{ABP} to \eqref{linear RHS} in the disk \(\mathcal{D}\), there exists some \(\sigma>0\) such that
    \begin{equation}\label{ABP conclusion}
        w \leq \max_{x\in \mathcal{D}} w(x) - \sigma \leq 1 - \sigma 
    \end{equation}
    in the smaller disk
    \[
    \mathcal{D}' = \Bigl\{ (x_{1}-\tfrac{1}{2})^{2} + (x_{2}-\tfrac{1}{2})^{2} \leq \tfrac{1}{400} \Bigr\}.
    \]
    Then, as $\delta_{1}$ is sufficiently small, we have
    \begin{equation}
        \int_{B_{1}\cap\mathbb{R}^{++}}\Psi^{2}\chi_{\{w\leq1-\sigma\}}dx\geq\int_{\mathcal{D}'}\Psi^{2}dx\geq\delta,
    \end{equation}
    thus proving the lemma.
\end{proof}

Now we give the proof of Theorem \ref{boundary Harnack with limit 1, radius 1}. The idea is to incorporate Lemma~\ref{uniform squezzing} with the weak Harnack principle (Proposition~\ref{weak harnack}).

\begin{proof}[Proof of Theorem \ref{boundary Harnack with limit 1, radius 1}]
    By Lemma \ref{L2 estimate} and Proposition \ref{L infinity estimate}, we have
    \[
    \|w\|_{L^{\infty}(B_{1/2}\cap\mathbb{R}^{++})} \leq C \max_{B_{1}\cap\mathbb{R}^{++}} u.
    \]
   We now denote
    \[
    M := \max_{B_{1/2}\cap\mathbb{R}^{++}} w.
    \]Then, we apply Proposition \ref{weak harnack} to the function 
    \[
    v(x) := w(x) - \bigl(1-\sigma_{1}\bigr) M,
    \]
    where \(\sigma_{1}\) is the constant from Lemma~\ref{uniform squezzing}. Consequently,  we obtain
    \[
    \max_{B_{1/4}\cap\mathbb{R}^{++}} w \leq (1-\sigma_{2}) M,
    \]
    for some \(\sigma_{2} < \sigma_{1}\).
    
   We notice that if $w(x)$ satisfies equation \eqref{linear RHS}, then as $\Psi$ has the homogeneous degree $\frac{2}{1+s}$, $w_{r}(x)=w(rx)$ also satisfies \eqref{linear RHS}. Then, by iterating the oscillation reduction argument above on balls $B_{2^{-k}}\cap\mathbb{R}^{++}$, we deduce that there exist constants \(C > 0\) and \(\sigma>0\) (with \(\sigma \leq \log_{1/2}(1-\sigma_{2})\)) such that
    \[
    w(x) \leq C\,|x|^{\sigma} M \leq C\,|x|^{\sigma} \max_{B_{1}\cap\mathbb{R}^{++}} u.
    \]
    This completes the proof of Theorem \ref{boundary Harnack with limit 1, radius 1}.
\end{proof}

\section{Sharp Regularity and the Optimal Convergence of $\phi_{\epsilon}$}\label{section of optimal convergence}
In this section, we prove the continuity of $\phi_{\epsilon}$ with respect to $\epsilon$ in $C^{1,\alpha_s}(\mathbb{T}^2):=C^{1,\frac{1-s}{1+s}}(\mathbb{T}^2)$ topology, and establish the sharp convergence rate of $\phi_{\epsilon}\to\psi_0$ in the same space as $\epsilon\to0$.

\subsection{Proof of Theorem \ref{phi with upper bound}}\label{boundary harnack subsection} 
We now prove Theorem \ref{phi with upper bound}. The key observation is that $\phi_{\epsilon}$ and $\Psi_{\epsilon}$ satisfy the same PDE near the origin, as shown by the following localization lemma:

\begin{lemma}[Domain localization]\label{domain}
For sufficiently small $\epsilon > 0$, the following estimate holds in the rescaled neighborhood:
\begin{equation}
\phi_{\epsilon}(x) \leq \epsilon \quad \text{for all } x \in \mathbb{T}^{++} \cap B_{R(\epsilon)}, \quad \text{where } R(\epsilon) \coloneqq \frac{\sqrt{\epsilon}}{-\ln \epsilon}.
\end{equation}
\end{lemma}
\begin{proof}
We begin with the decomposition:
\begin{equation}\label{eq:decomposition}
    \phi_{\epsilon} = \left(\phi_{\epsilon} - \frac{\epsilon}{2}\right)_+ + \min\left\{\phi_{\epsilon}, \frac{\epsilon}{2}\right\} \leq \left(\phi_{\epsilon} - \frac{\epsilon}{2}\right)_+ + \frac{\epsilon}{2}.
\end{equation}
Based on the above decomposition, it suffices to establish the following bound for the truncated function $\phi_{\mathrm{cut}} \coloneqq (\phi_{\epsilon} - \epsilon/2)_+$:
\begin{equation}\label{eq:cut_estimate}
    \phi_{\mathrm{cut}}(x) \leq \frac{\epsilon}{2} \quad \text{for}\quad |x| \leq R(\epsilon) \coloneqq \frac{\sqrt{\epsilon}}{-\ln \epsilon}.
\end{equation}

We notice that $\phi_{\mathrm{cut}}$ satisfies the following differential inequality in the weak sense:
\begin{equation}\label{eq:diff_ineq}
    \Delta \phi_{\mathrm{cut}} \geq 2^s \Delta \psi_0 \text{\, in $[0,\frac{1}{2}]^2,$}
\end{equation}
with both $\phi_{\mathrm{cut}}$ and $\psi_0$ vanishing on $\partial([0,\frac{1}{2}]^2)$. It then follows from the maximum principle that
\begin{equation}\label{eq:max_principle}
    \phi_{\mathrm{cut}}(x) \leq 2^s \psi_0(x) \quad \text{in}\quad [0,\tfrac{1}{2}]^2.
\end{equation}
The required estimate \eqref{eq:cut_estimate} then follows from the asymptotic behavior of $\psi_0$ near the origin as in Lemma~\ref{BC patch}.
\end{proof}

\begin{proof}[Proof of Theorem \ref{phi with upper bound}]
The lower bound $\phi_{\epsilon} \geq \Psi_{\epsilon}$ follows immediately from Theorem \ref{Barrier previous}. For the upper bound, we proceed as follows.

By Lemma \ref{domain}, we have the pointwise estimate:
\begin{equation}\label{eq:pointwise_bound}
    \phi_{\epsilon}(x) \leq \epsilon \quad \text{for} \quad x \in \mathbb{T}^{++} \cap B_{R(\epsilon)}, \quad R(\epsilon) \coloneqq \frac{\sqrt{\epsilon}}{-\ln \epsilon}.
\end{equation}

We now consider the rescaled function:
\begin{equation}\label{eq:rescaled_phi}
    \Phi_{\epsilon}(y) \coloneqq \frac{1}{\epsilon}\phi_{\epsilon}(\sqrt{\epsilon}y), \quad y = \frac{x}{\sqrt{\epsilon}} \in B_{\rho(\epsilon)} \cap \mathbb{R}^{++}, \quad \rho(\epsilon) \coloneqq \frac{1}{-\ln \epsilon}.
\end{equation}
This scaling ensures that $0 \leq \Phi_{\epsilon}(y) \leq 1$ and $\Delta_y \Phi_{\epsilon} = -\Phi_{\epsilon}^{-s}$ in $B_{\rho(\epsilon)} \cap \mathbb{R}^{++}$.

By Theorem \ref{boundary Harnack with limit 1}, we obtain the following ratio estimate:
\begin{equation}\label{eq:harnack_ratio}
    \frac{\Phi_{\epsilon}(y)}{\Psi(y)} - 1 \leq C\left|\frac{y}{\rho(\epsilon)}\right|^{\sigma} \rho(\epsilon)^{-\frac{2}{1+s}} \|\Phi_{\epsilon}\|_{L^\infty(B_{\rho(\epsilon)} \cap \mathbb{R}^{++})} \leq C\left|\frac{y}{\rho(\epsilon)}\right|^{\sigma} \rho(\epsilon)^{-\frac{2}{1+s}}.
\end{equation}

Now we rescale back to original variables  in the sense that $x = \sqrt{\epsilon}y$, $R(\epsilon) = \sqrt{\epsilon}\rho(\epsilon)$).  We then  conclude that for $|x| \leq R(\epsilon)$ and $\epsilon \leq \epsilon_0$,
\begin{equation}\label{eq:final_bound}
    \frac{\phi_{\epsilon}(x)}{\Psi_{\epsilon}(x)} - 1 \leq C\left|\frac{x}{R(\epsilon)}\right|^{\sigma} \rho(\epsilon)^{-\frac{2}{1+s}} = C\epsilon^{-\sigma/2}|x|^{\sigma}|\ln \epsilon|^{\sigma - \frac{2}{1+s}}\leq\epsilon^{-\sigma}|x|^{\sigma}.
\end{equation}
\end{proof}

\subsection{Proof of Theorem \ref{main1}}
We present the proof of Theorem \ref{main1} in this section by combining Lemma \ref{GreensFunction} with Theorem \ref{phi with upper bound}. The key strategy involves analyzing the difference function $\phi_\epsilon - \Psi_\epsilon$, whose Laplacian $\Delta(\phi_\epsilon - \Psi_\epsilon)$ exhibits better integrability properties compared to $\Delta\phi_\epsilon$ alone.
\begin{proof}[Proof of Theorem \ref{main1}]
 We first denote $\mathcal{I}^2$ as the standard cube in $\mathbb{R}^2$: \begin{equation}
\mathcal{I}^2=[-\frac{1}{2},\frac{1}{2}]\times [-\frac{1}{2},\frac{1}{2}],
\end{equation}
and define \begin{equation}
    R_{\epsilon}(y):=G_{\epsilon}(\phi_{\epsilon})(y)-\frac{\epsilon^{s}}{\Psi_{\epsilon}^{s}}+sgn(y_1)sgn(y_2).
\end{equation}
    Then, we have the following decomposition: \begin{equation}\label{division}
    \begin{aligned}
        &\quad\nabla \big(\phi_{\epsilon}-\Psi_{\epsilon}-\psi_0\big)(\Tilde{x})-\nabla \big(\phi_{\epsilon}-\Psi_{\epsilon}-\psi_0\big)(x)\\&= \int_{\mathbb{T}^2}\bigg(\big(\nabla \mathcal{G}(\Tilde{x},y)-\frac{(\Tilde{x}-y)}{|\Tilde{x}-y|^2}\big)-\big(\nabla \mathcal{G}(x,y)-\frac{(x-y)}{| x-y| ^2}\big)\bigg)[G_{\epsilon}(\phi_{\epsilon})(y)+sgn(y_1)sgn(y_2)]dy\\&+\big(\int_{\Tilde{x}+(\mathcal{I}^2)^{c}}\frac{\Tilde{x}-y}{\|\Tilde{x}-y\|^2}\frac{\epsilon^{s}}{\Psi_{\epsilon}^{s}}dy-\int_{x+(\mathcal{I}^2)^{c}}\frac{x-y}{\|x-y\|^2}\frac{\epsilon^{s}}{\Psi_{\epsilon}^{s}}dy\big)
        +\int_{B_{2|x-\Tilde{x}|(x)}^{c}\cap \mathbb{T}^2}\big(\frac{\Tilde{x}-y}{| \Tilde{x}-y| ^2}-\frac{x-y}{| x-y| ^2}\big) R_{\epsilon}(y)dy\\&+\int_{B_{2|x-\Tilde{x}|(x)}}\frac{\Tilde{x}-y}{|\Tilde{x}-y|^2}R_{\epsilon}(y)dy+\int_{B_{2|x-\Tilde{x}|(x)}}\frac{x-y}{|x-y|^2}R_{\epsilon}(y)dy\\&:=E_1+E_2+I_1+I_2+I_3,\text{  here $\|\cdot\|$ represent the Euclidean norm in $\mathbb{R}^2$.}
    \end{aligned}
    \end{equation}
   \begin{center}
       \includegraphics[width=0.8\linewidth]{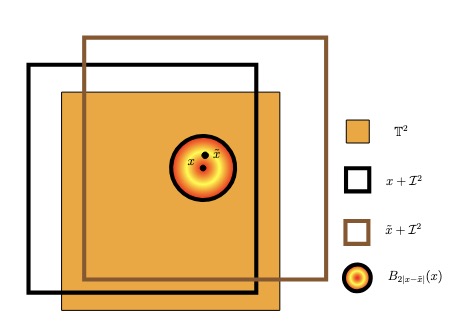}
\captionof{figure}{Illustrative figure for the domain division in estimating \eqref{division}}
   \end{center}
  Using Lemma \ref{GreensFunction}, we obtain 
    \begin{equation}
        \begin{aligned}
            |E_1|&\leq C|x-\Tilde{x}|\int_{\mathbb{T}^2} |G_{\epsilon}(\phi_{\epsilon})+sgn(y_1)sgn(y_2)|dy\\&
           \leq C|x-\Tilde{x}|\int_{\{\psi_0<\epsilon\}\cap Q}\frac{\epsilon^s}{\Psi_{\epsilon}^{s}}dy\leq C|x-\Tilde{x}|\int_{\{\psi_0<\epsilon\}\cap Q}\frac{\epsilon^s}{\psi_{0}^{s}}dy=O(\epsilon^{s}|x-\Tilde{x}|)
        \end{aligned}
    \end{equation}
    \par
\textbf{Estimate of $E_2$}\\
Let us denote $S_{x}=x+(\mathcal{I}^{2})^{c}$ and $S_{\Tilde{x}}=\Tilde{x}+(\mathcal{I}^{2})^{c}$. 
From our assumptions, for all $y \in S_{x}\cap S_{\Tilde{x}}$, by fundamental theorem of calculus, we have
\begin{equation}
\left\|\frac{x-y}{\|x-y\|^2}-\frac{\Tilde{x}-y}{\|\Tilde{x}-y\|^2}\right\| \leq \frac{C\|x-\Tilde{x}\|}{\|y\|^2}. 
\end{equation} 
Furthermore,  we have 
\begin{equation}
\begin{aligned}
|E_2|&\leq \left|\int_{S_{x}\cap S_{\Tilde{x}}} \left(\frac{x-y}{\|x-y\|^2}-\frac{\Tilde{x}-y}{\|\Tilde{x}-y\|^2}\right)\frac{\epsilon^{s}}{\Psi_{\epsilon}^{s}}dy\right| \\
&\quad + \int_{S_{x}\Delta S_{\Tilde{x}}} \left(\frac{1}{\|x-y\|}+\frac{1}{\|\Tilde{x}-y\|}\right)\frac{\epsilon^{s}}{|\Psi_{\epsilon}^{s}|}dy \\
&:= E_{21}+E_{22}
\end{aligned} 
\end{equation}

First, we have
\begin{equation}
E_{21}\leq C\|x-\Tilde{x}\|\int _{S_{x}\cap S_{\Tilde{x}}}\frac{\epsilon^{\frac{s}{s+1}}}{r^{\frac{2+4s}{s+1}}\sin^{s}{(2\theta)}}dy_{1}dy_{2}=O(\epsilon^{\frac{s}{s+1}}|x-\Tilde{x}|).
\end{equation}
In addition, as $|S_{x}\Delta S_{\Tilde{x}}|\leq C|x-\Tilde{x}|$, we end up with 
\begin{equation}
E_{22} \leq C \int_{S_{x}\Delta S_{\Tilde{x}}} \frac{\epsilon^{s}}{|\Psi_{\epsilon}|^{s}}dy\leq C \int_{S_{x}\Delta S_{\Tilde{x}}} \frac{\epsilon^{\frac{s}{s+1}}}{r^{\frac{2s}{s+1}}|\sin^{s}(2\theta)|}dy\leq C\epsilon^{\frac{s}{s+1}}|x-\Tilde{x}|.
\end{equation}
Therefore, $|E_2|=O(\epsilon^{\frac{s}{s+1}}|x-\Tilde{x}|)$.

\textbf{Estimate of $I_1$}\\
   Let us define the domain $\Omega:=B_{2|x-\Tilde{x}|(x)}^{c}\cap \{(y_{1},y_{2})\big|\,\frac{1}{2}>y_{1}>y_{2}>0\}$. By the fundamental theorem of calculus and various symmetry assumptions, to obtain an upper bound for $I_1$, it suffices to control
    \begin{equation}
        |x-\Tilde{x}|\int_{\Omega}\frac{1}{|x-y|^2}|R_{\epsilon}(\phi_{\epsilon})(y)|dy.
    \end{equation}

We first divide $\Omega$ into two regions:
\begin{equation}
    \Omega_{\epsilon} := \Omega \cap \left\{y \mid |y| \leq \dfrac{\epsilon^{1/2}}{\ln(1/\epsilon)}\right\}\mbox{ as }\Omega_{\epsilon}^{c} := \Omega \setminus \Omega_{\epsilon}.
\end{equation}
This leads to the decomposition:
\begin{equation}
    \begin{aligned}
         &\quad\big|x-\Tilde{x}\big|\int_{\Omega}\frac{\big|R_{\epsilon}(\phi_{\epsilon})(y)\big|}{|x-y|^2}dy\\
        &\leq \big|x-\Tilde{x}\big|\big(\int_{\Omega_{\epsilon}}\frac{|R_{\epsilon}(\phi_{\epsilon})(y)+1|}{|x-y|^2}dy\big)+\big|x-\Tilde{x}\big|\int_{\Omega_{\epsilon}}\frac{1}{|x-y|^2}dy\\& +|x-\Tilde{x}|\int_{\Omega_{\epsilon }^{c}}\frac{1}{|x-y|^2}\frac{\epsilon^{s}}{|\Psi_{\epsilon}|^{s}}dy+|x-\Tilde{x}|\int_{\Omega_{\epsilon}^{c}}\frac{|G_{\epsilon}(\phi_{\epsilon})(y)-1|}{|x-y|^2}dy\\
        &:=J_1+J_2+J_3+J_4.
        \end{aligned}
        \end{equation}
See the figure below for the domain division in calculating $I_1$.
\begin{center}
    \includegraphics[width=0.6\linewidth]{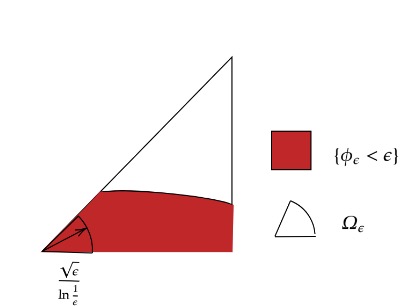}
\captionof{figure}{Illustrative figure for the domain division for calculating $I_1$.}
\end{center}
Estimate of $J_1$: 
    By Theorem \ref{phi with upper bound} and H\"older's inequality with exponent $\frac{1}{p}=\frac{2s}{s+1}-\frac{\sigma}{2}$, we obtain
    \begin{equation}
    \begin{aligned}
        J_1&\leq \epsilon^{\frac{s}{s+1}-\sigma}|x-\Tilde{x}| \int_{\Omega_{\epsilon}}\frac{1}{|x-y|^{\frac{2}{s+1}+\frac{\sigma}{2}}}(\frac{1}{|y_2|^{\frac{2s}{1+s}-\sigma}} \frac{1}{|x-y|^{\frac{2s}{s+1}-\frac{\sigma}{2}}})dy
        \\&\leq \epsilon^{\frac{s}{s+1}-\sigma}|x-\Tilde{x}|\bigg | \int_{\Omega_{\epsilon}^{c}}\frac{1}{|x-y|^{\frac{2p^{*}}{s+1}+\frac{p^{*}\sigma}{2}}}dy\bigg|^{\frac{1}{p*}}\bigg|\int_{\Omega_{\epsilon}^{c}}\big(\frac{1}{|y_2|^{\frac{2s}{1+s}-\sigma}}\frac{1}{|x-y|^{\frac{2s}{s+1}-\frac{\sigma}{2}}}\big)^{p}dy\bigg|^{\frac{1}{p}}
        \\& \leq  C \epsilon^{\frac{s}{s+1}-\sigma}|x-\Tilde{x}|^{\frac{1-s}{1+s}+\frac{\sigma}{2}}.
        \end{aligned}
    \end{equation}
Estimate for $J_2$: By applying  H\"older's inequality with $\frac{1}{p}=\frac{1}{1+s}+\frac{\sigma}{4}$, we have \begin{equation}
    J_2\leq |x-\Tilde{x}|\bigg(\int_{\Omega_{\epsilon}}\frac{1}{|x-y|^{2p}}dy\bigg)^{\frac{1}{p}}\big|\Omega_{\epsilon}\big|^{\frac{1}{p*}}\leq \epsilon^{\frac{s}{1+s}-2\sigma}\big|x-\Tilde{x}\big|^{\frac{1-s}{1+s}+\frac{\sigma}{2}}.
\end{equation}
Estimate for $J_3$:
Using H\"older's inequality with $\frac{1}{p}=s+\frac{s(1-s)}{s+1}-\frac{\sigma}{2}$, we get
\begin{equation}
\begin{aligned}
     J_3=&\epsilon^{\frac{s}{4}}|x-\Tilde{x}| \int_{\Omega_{\epsilon}^{c}} \frac{1}{|x-y|^{2-s-\delta}}\big(\frac{1}{|x-y|^{s+\delta}}\frac{1}{|y_{2}|^{s}}\big)dy\\
     &\leq \epsilon^{\frac{s}{4}}|x-\Tilde{x}| \bigg(\int_{\Omega_{\epsilon}^{c}} \frac{1}{|x-y|^{(2-s-\delta)p*}}dy\bigg)^{\frac{1}{p*}}\bigg(\int_{\Omega_{\epsilon}^{c}}[\frac{1}{|x-y|^{(s+\delta)p}}\frac{1}{|y_{2}|^{sp}}]dy\bigg)^{\frac{1}{p}}\\&\leq C \epsilon^{\frac{s}{4}}|x-\Tilde{x}|^{\frac{1-s}{1+s}+\frac{\sigma}{2}}.
\end{aligned}
\end{equation}
Estimate for $J_4$: By H\"older's inequality with $\frac{1}{p}=\frac{1}{1+s}+\frac{\sigma}{4}$, we get 
\begin{equation}
\begin{aligned}
    J_4&\leq \epsilon^{s}|x-\Tilde{x}|\int_{\{\psi_0<\epsilon\}\cap \Omega}\frac{1}{|x-y|^2}\frac{1}{|\psi_0(y)|^s}dy\\&\leq\epsilon^{s}|x-\Tilde{x}|\bigg(\int_{\{\psi_0<\epsilon\}\cap \Omega} \frac{1}{|x-y|^{2p}}dy\bigg)^{\frac{1}{p}}\bigg(\int_{\{\psi_0<\epsilon\}\cap \Omega}\frac{1}{|\psi_{0}|^{p^{*}s}(y)}dy\bigg)^{\frac{1}{p^{*}}}\\
    &\leq\epsilon^{s}|x-\Tilde{x}|^{\frac{1-s}{1+s}+\frac{\sigma}{2}}.
\end{aligned}
\end{equation}
Combining these estimates, we end up with
 \begin{equation}
    |I_1|\leq C \epsilon^{\frac{s}{4}}|x-\Tilde{x}|^{\frac{1-s}{1+s}+\frac{\sigma}{2}}.
\end{equation}

\textbf{Estimate of $I_2$}:\par
We define $\Tilde{\Omega}:=B_{2|x-\Tilde{x}|}(x)\cap \left\{0<y_{2}<y_{1}<\frac{1}{2}\right\}$. By symmetry assumptions, to obtain a upper bound for $I_2$, it suffices to control $\int_{\Tilde{\Omega}}\frac{|R_{\epsilon}(y)|}{|\Tilde{x}-y|} dy$.
Partition $\widetilde{\Omega}$ into
\begin{equation}
    \widetilde{\Omega}_\epsilon := \widetilde{\Omega} \cap \left\{y \mid |y| \leq \epsilon^{1/2}/\ln(1/\epsilon)\right\}\mbox{ and }\widetilde{\Omega}_\epsilon^c := \widetilde{\Omega} \setminus \widetilde{\Omega}_\epsilon.
\end{equation}
This gives the decomposition:
\begin{equation}
    \begin{aligned}
       &\int_{\Tilde{\Omega}}\frac{|R_{\epsilon}(y)|}{|\Tilde{x}-y|} dy  =\int_{\Tilde{\Omega}_{\epsilon}}\frac{|R_{\epsilon}(y)+1|}{|x-y|}dy+\int_{\Tilde{\Omega}_{\epsilon}}\frac{1}{|x-y|}dy+\int_{\Tilde{\Omega}_{\epsilon}^{c}}\frac{1}{|x-y|}\frac{\epsilon^{s}}{|\psi_0|^s}dy+\int_{\Tilde{\Omega}_{\epsilon}^{c}}\frac{|G_{\epsilon}(\phi_{\epsilon})(y)-1|}{|x-y|}dy\\&:=M_1+M_2+M_3+M_4.
    \end{aligned}
\end{equation}
See the figure below for the domain division in estimating $I_2$.
\begin{center}
    \includegraphics[width=0.6\linewidth]{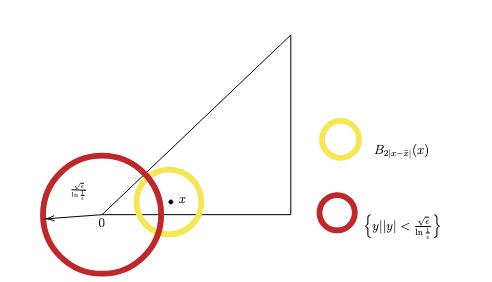}
\captionof{figure}{Illustrative figure for the domain division for calculating $I_2$.}
\end{center}
Estimate of $M_1$:
Again by Theorem \ref{phi with upper bound} and H\"older inequality with  $\frac{1}{p}=\frac{2s}{s+1}-\frac{\sigma}{2}$, we have 
\begin{equation}
    \begin{aligned}
         M_1&\leq \epsilon^{\frac{s}{s+1}-\sigma}\int_{\Tilde{\Omega}_{\epsilon}}  \frac{1}{|x-y|^{\frac{1-s}{s+1}+\frac{\sigma}{2}}}\big( \frac{1}{|x-y|^{\frac{2s}{s+1}-\frac{\sigma}{2}}}\frac{1}{|y_{2}|^{\frac{2s}{s+1}-\sigma}}\big)dy\\&\leq \epsilon^{\frac{s}{s+1}-\sigma}\bigg(\int_{\Tilde{\Omega}_{\epsilon}}\big(\frac{1}{|x-y|^{\frac{1-s}{s+1}+\frac{\sigma}{2}}}\big)^{p^{*}}dy\bigg)^{\frac{1}{p^{*}}}\bigg(\int_{\Tilde{\Omega}_{\epsilon}} \big(\frac{1}{|x-y|^{\frac{2s}{s+1}-\frac{\sigma}{2}}} \frac{1}{|y_{2}|^{\frac{2s}{s+1}-\sigma}}\big)^{p}dy\bigg)^{\frac{1}{p}}\\&\leq C \epsilon^{\frac{s}{s+1}-\sigma}|x-\Tilde{x}|^{\frac{1-s}{1+s}+\frac{\sigma}{2}}.  
    \end{aligned}
\end{equation}
 Estimate for $M_2$: Using H\"older inequality with $\frac{1}{p}=\frac{1}{1+s}+\frac{\sigma}{4}$,  we get \begin{equation}
    M_2\leq \bigg(\int_{\Tilde{\Omega}_{\epsilon}}\frac{1}{|x-y|^{p}}dy\bigg)^{\frac{1}{p}}\big|\Omega_{\epsilon}\big|^{\frac{1}{p*}}\leq C\epsilon^{\frac{s}{1+s}-2\sigma}\big|x-\Tilde{x}\big|^{\frac{1-s}{1+s}+\frac{\sigma}{2}}.
\end{equation}
Estimate for $M_3$:
By H\"older inequality with $\frac{1}{p}=s+\frac{s(1-s)}{s+1}-\frac{\sigma}{2}$, we have 
\begin{equation}
\begin{aligned}
     M_3=&\epsilon^{\frac{s}{4}} \int_{\Tilde{\Omega}_{\epsilon}^{c}} \frac{1}{|x-y|^{1-s-\delta}}\big(\frac{1}{|x-y|^{s+\delta}}\frac{1}{|y_{2}|^{s}}\big)dy\\
     &\leq \epsilon^{\frac{s}{4}} \bigg(\int_{\Tilde{\Omega}_{\epsilon}^{c}} \frac{1}{|x-y|^{(1-s-\delta)p*}}dy\bigg)^{\frac{1}{p*}}\bigg(\int_{\Tilde{\Omega}_{\epsilon}^{c}}[\frac{1}{|x-y|^{(s+\delta)p}}\frac{1}{|y_{2}|^{sp}}]dy\bigg)^{\frac{1}{p}}\\&\leq C\epsilon^{\frac{s}{4}}|x-\Tilde{x}|^{\frac{1-s}{1+s}+\frac{\sigma}{2}}.
\end{aligned}
\end{equation}
Estimate for $M_4$: 
Applying H\"older inequality with $\frac{1}{p}=\frac{1}{1+s}+\frac{\sigma}{4}$, we have 
\begin{equation}
\begin{aligned}
    M_4&\leq \epsilon^{s}\int_{\{\psi_0<\epsilon\}\cap \Omega}\frac{1}{|x-y|}\frac{1}{|\psi_0(y)|^s}dy\\&\leq\epsilon^{s}\bigg(\int_{\{\psi_0<\epsilon\}\cap \Omega} \frac{1}{|x-y|^{p}}dy\bigg)^{\frac{1}{p}}\bigg(\int_{\{\psi_0<\epsilon\}\cap \Omega}\frac{1}{|\psi_{0}|^{p^{*}s}(y)}dy\bigg)^{\frac{1}{p^{*}}}\\&\leq C\epsilon^{s}|x-\Tilde{x}|^{\frac{1-s}{1+s}+\frac{\sigma}{2}}.
\end{aligned}
\end{equation}
As a result, we have the following. \begin{equation}
    |I_2|\leq \epsilon^{\frac{\epsilon}{4}}|x-\Tilde{x}|^{\frac{1-s}{s+1}+\frac{\sigma}{2}}.
\end{equation}
Similarly, we have \begin{equation}
    |I_3|\leq \epsilon^{\frac{\epsilon}{4}}|x-\Tilde{x}|^{\frac{1-s}{s+1}+\frac{\sigma}{2}}.
\end{equation}
Now we have proved \begin{equation}
    \begin{aligned}
         &\quad|\nabla \big(\phi_{\epsilon}-\Psi_{\epsilon}-\psi_0\big)(\Tilde{x})-\nabla \big(\phi_{\epsilon}-\Psi_{\epsilon}-\psi_0\big)(x)|\leq C \epsilon^{\frac{s}{s+1}}|x-\Tilde{x}|+\epsilon^{\frac{s}{4}}|x-\Tilde{x}|^{\frac{1-s}{s+1}+\frac{\sigma}{2}}.
    \end{aligned}
\end{equation}
By setting $\sigma_{0}=\frac{\sigma}{2}$, we now obtain the desired estimate in Theorem \ref{main1}.
\end{proof}
\subsection{Proof of Theorem \ref{main2}}
In this section, We are going to complete the proof of Theorem \ref{main2}, establishing both the continuity of $\phi_\epsilon$ in $\epsilon$ and its optimal convergence to $\psi_0$.

\begin{proof}[Proof of Theorem \ref{main2}]
\textbf{Part 1: Continuity of $\phi_\epsilon$ in $\epsilon$}.  
Assume by contradiction that $\phi_\epsilon$ is not continuous at some $\epsilon_0 > 0$ in the $C^{1,\alpha_s}(\mathbb{T}^2) \coloneqq C^{1,\frac{1-s}{1+s}}(\mathbb{T}^2)$ topology. By the symmetry of $\phi_\epsilon$, there exist $\delta > 0$ and a sequence $\epsilon_n \to \epsilon_0$ such that
\begin{equation}\label{eq:discontinuity}
    \|\phi_{\epsilon_0} - \phi_{\epsilon_n}\|_{C^{1,\alpha_s}(\mathbb{T}^2 \cap B_{1/3}(0))} \geq \frac{1}{10} \|\phi_{\epsilon_0} - \phi_{\epsilon_n}\|_{C^{1,\alpha_s}(\mathbb{T}^2)} > \frac{\delta}{10}.
\end{equation}

Due to the decomposition $\phi_\epsilon = \Psi_\epsilon + (\phi_\epsilon - \Psi_\epsilon)$ and the continuity of $\Psi_\epsilon$ in $\epsilon$, we may assume
\begin{equation}\label{eq:diff_estimate}
    \| (\phi_{\epsilon_n} - \Psi_{\epsilon_n}) - (\phi_{\epsilon_0} - \Psi_{\epsilon_0}) \|_{C^{1,\alpha_s}(\mathbb{T}^2 \cap B_{1/3}(0))} > \frac{\delta}{20}.
\end{equation}

As in \eqref{Higher Holder Estimates1}, $\phi_\epsilon - \Psi_\epsilon$ is uniformly bounded in $C^{1,\alpha_s + \sigma}(\mathbb{T}^2 \cap B_{1/3}(0))$. Thus, along a subsequence $\{\phi_{\epsilon_n^{'}}\}$,  there exists  $\widetilde{\phi}_{\epsilon_0}$ in $C^{1,\alpha_s}$, 
$\phi_{\epsilon_n^{'}}$ converges to $\widetilde{\phi}_{\epsilon_0}$ in $C^{1,\alpha_s}(\mathbb{T}^2)$.
Moreover, $\widetilde{\phi}_{\epsilon_0}$ satisfies the equation
\begin{equation}\label{eq:limit_estimate}
    \|\widetilde{\phi}_{\epsilon_0} - \phi_{\epsilon_0}\|_{C^{1,\alpha_s}(\mathbb{T}^2)} \geq \|\widetilde{\phi}_{\epsilon_0} - \phi_{\epsilon_0}\|_{C^{1,\alpha_s}(\mathbb{T}^2 \cap B_{1/3}(0))}\geq\frac{\delta}{20}.
\end{equation}

However, since $\phi_{\epsilon_n} = \Delta^{-1}(G_{\epsilon_n}(\phi_{\epsilon_n}))$, the limit $\widetilde{\phi}_{\epsilon_0}$ satisfies:
\begin{itemize}
    \item $\Delta \widetilde{\phi}_{\epsilon_0} = G_{\epsilon_0}(\widetilde{\phi}_{\epsilon_0})$ in $\mathbb{T}^2$
    \item $\widetilde{\phi}_{\epsilon_0}$ is odd-odd symmetric
    \item $\widetilde{\phi}_{\epsilon_0}(x) \geq 0$ for $x \in \mathbb{T}^{++}$
\end{itemize}

By the maximum principle in \cite{Pino1992AGE}, $\phi_{\epsilon_0}$ is the unique non-negative solution to \eqref{algebraic} in $\mathbb{T}^{++}$, yielding $\widetilde{\phi}_{\epsilon_0} = \phi_{\epsilon_0}$, which contradicts \eqref{eq:limit_estimate}.

\medskip
\textbf{Part 2: Optimal Convergence to $\psi_0$}. We note first 
Lemma \ref{Barrier function} shows that
\begin{equation}\label{eq:psi_regularity}
    \lim_{\epsilon \to 0} \|\Psi_\epsilon\|_{C^{1,\alpha_s}(\mathbb{T}^2)} = 0 \quad \text{and} \quad \Psi_\epsilon \notin C^{1,\beta} \text{ for any } \beta > \alpha_s.
\end{equation}

Due the decomposition $\phi_\epsilon = \Psi_\epsilon + (\phi_\epsilon - \Psi_\epsilon)$,  we apply Theorem \ref{main1} and obtain
\[
\lim_{\epsilon \to 0} \|\phi_\epsilon - \psi_0\|_{C^{1,\alpha_s}(\mathbb{T}^2 \cap B_{1/3}(0))} = 0.
\]
We can now use the odd-odd symmetry and periodicity of $\phi_\epsilon$ and $\psi_0$ to extend the above convergence result to all points of $\mathbb{T}^2$:
\[
\lim_{\epsilon \to 0} \|\phi_\epsilon - \psi_0\|_{C^{1,\alpha_s}(\mathbb{T}^2)} = 0.
\]

For any $\beta \in (\alpha_s, \alpha_s + \sigma)$, we observe that:
\begin{itemize}
    \item $(\phi_\epsilon - \widetilde{\psi}_\epsilon) \in C^{1,\beta}$ near the origin
    \item $\Psi_\epsilon \notin C^{1,\beta}$ by \eqref{eq:psi_regularity}
\end{itemize}
Thus, we have shown $\phi_\epsilon \notin C^{1,\beta}(\mathbb{T}^2)$, proving the optimality of the $\alpha_s$-regularity.
\end{proof}

\appendix

\section{Useful Technical Results}
We collect the key technical results used in proving our main theorems.

\begin{lemma}[Green's Function on $\mathbb{T}^2$]\label{GreensFunction}
The Green's function for the Laplacian on $\mathbb{T}^2$ admits the decomposition:
\begin{equation}\label{Green}
\mathcal{G}(x,y) = \frac{1}{2\pi}\ln{|x-y|} + f(x-y),
\end{equation}
where $f \in C^\infty(\mathbb{T}^2 \times \mathbb{T}^2)$ is a smooth function. Here, $|x-y|$ denotes the periodic distance on $\mathbb{T}^2$. See \cite{Denisov2012DoubleEG} for proof.
\end{lemma}

\begin{lemma}[Bahouri-Chemin Patch]\label{BC patch}
Let $\psi_0$ be the stream function for the Bahouri-Chemin patch. Near the origin, in polar coordinates $(r,\theta)$ it satisfies:
\begin{equation}
\psi_0(r,\theta) \sim -r^2 \ln r \cdot \sin(2\theta).
\end{equation}
For complete details, see \cite{Bahouri1994EquationsDT}.
\end{lemma}

\begin{lemma}[Barrier Function]\label{Barrier function}
There exists a solution $\Psi_{\epsilon}$ to the boundary value problem:
\begin{equation}\label{special}
\begin{cases}
\Delta \Psi = -\dfrac{\epsilon^s}{\tilde \psi^{s}} & \text{in } \mathbb{R}^{++}, \\
\Psi = 0 & \text{on } \partial\mathbb{R}^{++}.
\end{cases}
\end{equation}
with the following characteristics:
\begin{itemize}
\item Regularity: $\Psi_{\epsilon} \in C^{1,\alpha_s}_{\text{loc}}(\mathbb{R}^{2})$ for $\alpha_{s}=\frac{1-s}{1+s}$
\item Growth rate estimate in polar coordinate:
\begin{equation}\label{maxiun principle 18}
\frac{1}{C}\epsilon^{\frac{s}{s+1}}r^{\frac{2}{s+1}}\sin(2\theta) < \Psi_{\epsilon}(r,\theta) < C\epsilon^{\frac{s}{s+1}}r^{\frac{2}{s+1}}\sin(2\theta)
\end{equation}
\end{itemize}
The proof of Lemma \ref{Barrier function} is contained in \cite{elgindi2022regular}.
\end{lemma}

\section{Generalization of Theorem \ref{boundary Harnack with limit 1, radius 1}}
We now extend Theorem \ref{boundary Harnack with limit 1, radius 1} by relaxing the lower bound assumption from $u \geq \Psi$ to $u \geq 0$. The generalized result is as follows:

\begin{theorem}\label{thm:generalized_BHI}
    Let $0 < s < 1$ and suppose $u \geq 0$ satisfies:
    \begin{equation}\label{eq:general_PDE}
    \begin{cases}
        \Delta u = -u^{-s} & \text{in } B_1 \cap \mathbb{R}^{++}, \\
        u(x_1,x_2) = 0 & \text{if } x_1 = 0 \text{ or } x_2 = 0.
    \end{cases}
    \end{equation}
    Then there exist constants $C, \sigma > 0$ (depending only on $s$) such that for all $x \in B_{1/2} \cap \mathbb{R}^{++}$:
    \begin{equation}\label{eq:general_estimate}
        -C|x|^\sigma \leq \frac{u(x)}{\Psi(x)} - 1 \leq C|x|^\sigma \max_{B_1} u,
    \end{equation}
    where $\Psi = \Psi_1$ is the homogeneous solution defined in \eqref{homogeneous solution definition}.
\end{theorem}

\begin{proof}
    Let $w = u/\Psi - 1$ as in \eqref{ratio definition}, and decompose it into positive and negative parts:
    \begin{equation}\label{eq:w_decomposition}
        w^+ = \max\{w, 0\}, \quad w^- = \max\{-w, 0\}.
    \end{equation}
    
    \noindent\textbf{Step 1: $L^2$ estimates.}  
    By construction, $0 \leq w^- \leq 1$, so $w^-$ is automatically $L^2$-integrable. For $w^+$, we construct a barrier function $v$ satisfying:
    \begin{equation}\label{eq:barrier}
    \begin{cases}
        \Delta v = -v^{-s} & \text{in } B_1 \cap \mathbb{R}^{++}, \\
        v|_{\partial(B_1 \cap \mathbb{R}^{++})} = \max\{u, \Psi\}.
    \end{cases}
    \end{equation}
    Since $v \geq (1 + w^+)\Psi$, Lemma \ref{L2 estimate} yields:
    \begin{equation}\label{eq:L2_bound}
        \|w^+\|_{L^2(B_{3/4} \cap \mathbb{R}^{++})} \leq \|v/\Psi\|_{L^2(B_{3/4} \cap \mathbb{R}^{++})} < \infty.
    \end{equation}
    Moreover, $w^{\pm}$ are both weak solutions to equations \eqref{linear RHS} and \eqref{0 RHS}.
    
    \noindent\textbf{Step 2: Convergence analysis.}  
    Applying the proof methodology of Theorem \ref{boundary Harnack with limit 1, radius 1} separately to $w^+$ and $w^-$:
\begin{enumerate}
    \item For $w^+ \geq 0$: We follow every single step in the proof of Theorem~\ref{boundary Harnack with limit 1, radius 1}, obtaining the upper bound in \eqref{eq:general_estimate} with the $\max_{B_1} u$ factor.
    
    \item For $w^- \geq 0$: The $L^{\infty}$ estimate (Proposition~\ref{L infinity estimate}) is omitted since $w^- \leq 1$ holds by construction, and the remaining steps proceed identically to establish the lower bound.
\end{enumerate}
The combination of these two cases yields the two-sided estimate \eqref{eq:general_estimate}.
\end{proof}

\section*{Acknowledgement}
The authors would like to thank Columbia University for organizing the conference on free-boundary problems to bring the authors together. The guidance from Ovidiu Savin about the boundary Harnack principle helped the authors come up with the idea in this paper. We thank Jing An for providing suggestions in writing the paper. The authors would thank Tarek Elgindi for constant support.
\section*{Clarification}
\subsection*{Data availability}
    Data sharing is not applicable to this article as no data sets were generated or analyzed
during the current study.
\subsection*{Conflicts of interest}
The authors have no conflict of interest to declare for this article.
\subsection*{Ethic approval}
Not applicable.
\subsection*{Funding}
This research received no specific grant from any funding agency in the public, commercial, or not-for-profit sectors.

\end{document}